\crefname{theorem}{Theorem}{Theorems}
\crefname{thm}{Theorem}{Theorems}
\crefname{lemma}{Lemma}{Lemmas}
\crefname{lem}{Lemma}{Lemmas}
\crefname{remark}{Remark}{Remarks}
\crefname{prop}{Proposition}{Propositions}
\crefname{defn}{Definition}{Definitions}
\crefname{corollary}{Corollary}{Corollaries}
\crefname{conjecture}{Conjecture}{Conjectures}
\crefname{question}{Question}{Questions}
\crefname{chapter}{Chapter}{Chapters}
\crefname{section}{Section}{Sections}
\crefname{figure}{Figure}{Figures}
\theoremstyle{plain}
\newtheorem{thm}{Theorem}[section]
\newtheorem{lemma}[thm]{Lemma}
\newtheorem{corollary}[thm]{Corollary}
\newtheorem{prop}[thm]{Proposition}
\newtheorem{question}[thm]{Question}
\theoremstyle{definition}
\theoremstyle{remark}
\newtheorem*{remark}{Remark}
\numberwithin{equation}{section}
\renewcommand{\P}{\mathbb P}
\newcommand{\E}{\mathbb E}
\newcommand{\R}{\mathbb R}
\newcommand{\Z}{\mathbb Z}
\newcommand{\N}{\mathbb N}
\newcommand{\sA}{\mathscr A}
\newcommand{\sF}{\mathscr F}
\newcommand{\WUSF}{\mathsf{WUSF}}
\newcommand{\FUSF}{\mathsf{FUSF}}
\newcommand{\eps}{\varepsilon}
\def\bmid{\,\big|\,}
\def\Bmid{\;\Big|\;}
\newcommand{\freq}{\operatorname{freq}}
\newcommand{\cost}{\operatorname{cost}}
\newcommand{\spann}{\mathcal{S}}
\newcommand{\conn}{\mathcal{U}}
\def\lora{\longrightarrow}
\def\llra{\longleftrightarrow}
\title{Kazhdan groups have cost $1$}
\author{Tom Hutchcroft and G\'abor Pete}
\begin{document}

\maketitle

\begin{abstract}
We prove that every countably infinite group with Kazhdan's property (T) has cost $1$, 
answering a well-known question of Gaboriau. It remains open if they have fixed price $1$.
\end{abstract}

\section{Introduction}

The \emph{cost} of a free, probability measure preserving (p.m.p.)~action of a group is an orbit-equivalence invariant that was introduced by Levitt \cite{Levitt} and studied  extensively by Gaboriau \cite{Gabcout,MR1953191,GabICM}. Gaboriau used the notion of cost to prove several remarkable theorems, including that free groups of different ranks cannot have orbit equivalent free, ergodic,  p.m.p.\ actions. This result is in stark contrast with the amenable case, in which Ornstein and Weiss \cite{OW80} proved that any two free, ergodic p.m.p.\ actions are orbit equivalent. These results sparked a surge of interest in the cost of group actions, the fruits of which are summarised in the monographs and surveys \cite{MR2095154,MR2583950,GabICM,MR2807836}.

The cost of a \emph{group} is defined to be the infimal cost of all free, ergodic p.m.p.~actions of the group.
 We will employ here the following probabilistic definition, which is shown to be equivalent to the classical definition in \cite[Proposition 29.5]{MR2095154}. 
Let $\Gamma$ be a countable group. We define 
$\spann(\Gamma)$ to be the set of \textbf{connected spanning graphs} on $\Gamma$, that is, the set of connected, undirected, simple graphs with vertex set $\Gamma$. Formally, we define $\spann(\Gamma)$ to be the set of $\omega \in \{0,1\}^{\Gamma \times \Gamma}$ such that $\omega(a,b)=\omega(b,a)$ for every $a,b \in \Gamma$ and such that for each $a,b \in \Gamma$ there exists $n\geq 0$ and a sequence $a=a_0,a_1,\ldots,a_n =b$ in $\Gamma$ such that $\omega(a_{i-1},a_{i})=1$ for every $1\leq i \leq n$.  We equip $\{0,1\}^{\Gamma \times \Gamma}$ with the product topology and associated Borel $\sigma$-algebra, and equip $\spann(\Gamma)$ with the subspace topology and Borel $\sigma$-algebra. Note that $\spann(\Gamma)$ is \emph{not} closed in $\{0,1\}^{\Gamma\times\Gamma}$ when $\Gamma$ is infinite. 
For each $\omega \in \spann(\Gamma)$ and $\gamma \in \Gamma$ we define $\gamma \omega$ by setting $\gamma \omega(u,v) = \omega(\gamma^{-1} u, \gamma^{-1} v)$. We say that a probability measure on $\spann(\Gamma)$ is $\Gamma$\textbf{-invariant} if $\mu(\sA)= \mu(\gamma^{-1} \sA)$ for every Borel set $\sA \subseteq \spann(\Gamma)$, and write $M(\Gamma,\spann(\Gamma))$ for the set of $\Gamma$-invariant probability measures on $\spann(\Gamma)$.  
The {\bf cost of the group} $\Gamma$ can be defined to be
\begin{equation}
\label{eq:cost_def}
\cost(\Gamma) = \frac{1}{2} \inf \left\{ \int_{\omega \in \spann(\Gamma)} \deg_\omega(o) \dif \mu(\omega) : \mu \in M(\Gamma,\spann(\Gamma)) \right\},
\end{equation}
where $o$ is the identity element of $\Gamma$ and $\deg_\omega(o)$ is the degree of $o$ in the graph $\omega \in \spann(\Gamma)$.
Note that for nonamenable groups with cost $1$, and more generally for any non-treeable group, the infimum in \eqref{eq:cost_def} is not attained \cite[Propositions 30.4 and 30.6]{MR2095154}.

Every countably infinite amenable group has cost 1 by Orstein-Weiss \cite{OW80} (see \cite[Section 5]{BLPS99} for a probabilistic proof), while the free group $\mathbb{F}_k$ has cost $k$ \cite{MR1646912}. There are also however many nonamenable groups with cost 1, including the direct product $\Gamma_1 \times \Gamma_2$ of any two countably infinite groups $\Gamma_1$ and $\Gamma_2$ \cite[Theorem 33.3]{MR2095154} (which is nonamenable if at least one of $\Gamma_1$ or $\Gamma_2$ is nonamenable), and $\mathrm{SL}_d(\Z)$ with $d\ge 3$ \cite{Gabcout}.  See \cite{MR1646912} for many further examples.

In general, computing the cost of a group is not an easy task.  Nevertheless, one possible approach is suggested by the following question of Gaboriau, which connects the cost  to the \emph{first $\ell^2$-Betti number $\beta_1(\Gamma)$} of the group. This is a measure-equivalence invariant of the group that can be defined to be  the von Neumann dimension of the space of harmonic Dirichlet functions of any Cayley graph of the group. Equivalently, $\beta_1(\Gamma)$ can be defined in terms of the expected degree of the \emph{free uniform spanning forest} in any Cayley graph of $\Gamma$ by the equality $\E\deg_{\FUSF}(o)=2+2\beta_1(\Gamma)$, see \cite[Section 10.8]{LP:book}. Gaboriau \cite{MR1953191} proved that $\cost(\Gamma) \geq 1 + \beta_1(\Gamma)$ and asked whether this inequality is ever strict.

\begin{question}[Gaboriau]\label{q.costbeta}
Is $\cost(\Gamma) = 1 + \beta_1(\Gamma)$ for every countably infinite group $\Gamma$?
\end{question}

For groups with Kazhdan's property (T), defined below, it was proven by Bekka and Valette  that $\beta_1=0$ \cite{BV97}. However, in spite of several works connecting property (T), cost, and percolation theory \cite{GabOHD,LS99,IKT09, LyonsFixed}, the cost of Kazhdan groups has remained elusive \cite[Question 6.4]{GabICM}, and has thus become a famous test example for Question~\ref{q.costbeta}. This paper addresses this question.

\begin{thm}
\label{thm:main}
Let $\Gamma$ be a countably infinite Kazhdan group. 
 Then $\Gamma$ has cost $1$.
\end{thm}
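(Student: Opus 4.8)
The lower bound $\cost(\Gamma)\ge 1$ is immediate from the results quoted above: Gaboriau's inequality $\cost(\Gamma)\ge 1+\beta_1(\Gamma)$ together with the Bekka--Valette theorem $\beta_1(\Gamma)=0$ gives it at once. All the work is in the matching upper bound, so the plan is to construct, for every $\eps>0$, a $\Gamma$-invariant probability measure supported on connected spanning graphs whose expected root-degree is at most $2+\eps$; by the definition \eqref{eq:cost_def} this forces $\cost(\Gamma)\le 1$.

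The natural base object is the wired uniform spanning forest $\mathfrak{F}$ of a Cayley graph of $\Gamma$, which is a $\Gamma$-invariant random spanning forest all of whose components are infinite. A mass-transport computation, comparing edges per vertex with components per vertex, gives $\E\deg_{\mathfrak F}(o)=2-2\,\E\!\left[1/|C(o)|\right]=2$, where $C(o)$ is the tree containing $o$ and the second term vanishes because $|C(o)|=\infty$ almost surely. Since $\beta_1(\Gamma)=0$ we moreover have $\WUSF=\FUSF$, so this single object attains the optimal expected degree $2$ predicted by the lower bound. The only defect is that $\mathfrak F$ is typically disconnected, partitioning $\Gamma$ into infinitely many one-ended trees; the entire remaining task is to merge these trees into a single connected spanning graph while adding only $o(1)$ expected degree.

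There is, heuristically, ample room to do this. Because every tree of $\mathfrak F$ is infinite, the density of components $\E[1/|C(o)|]$ is zero, so in a density sense there are almost no components to join. Collapsing each tree to a point yields a $\Gamma$-invariant quotient graph on the set of trees, and it suffices to add, along a connected spanning structure of this quotient, one genuine Cayley edge per quotient edge; the expected number of added edges per vertex then equals the edge density of that structure, which in the ideal case is again the (vanishing) density of components. The problem thus reduces to selecting a $\Gamma$-invariant connected spanning subgraph of the tree-of-trees quotient with arbitrarily small edge density. This reduction, however, is not free: it is essentially the assertion that the quotient structure itself can be connected at cost approaching $1$, and it is here that the hypothesis on $\Gamma$ must do its work.

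This is the main obstacle, and it is precisely where Kazhdan's property (T) enters. Property (T) is used through its spectral-gap / Poincar\'e-inequality form: it rules out the almost-invariant, amenable-type spreading of the component partition that would otherwise force a positive density of connecting edges, and it is exactly this rigidity that separates the Kazhdan case (cost $1$) from treeable groups such as $\mathbb{F}_k$ (cost $1+\beta_1>1$), for which no such cheap connection exists. Concretely, I would set up an iterative merging scheme that at each stage coarsens the partition of $\Gamma$ into trees, and use the Kazhdan constant to derive a uniform quantitative estimate showing both that the scheme terminates in a single connected class and that the total expected degree added is summable and can be made smaller than $\eps$. Establishing this quantitative merging bound -- simultaneously guaranteeing almost-sure connectedness of the output and a vanishing contribution to the expected degree -- is the heart of the argument and the step I expect to be genuinely hard.
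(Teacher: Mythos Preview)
Your proposal is an outline, not a proof: you explicitly leave the central step undone---deriving from the Kazhdan constant a quantitative bound that makes an iterative tree-merging scheme both terminate in a single component and add only $o(1)$ expected degree---calling it ``the step I expect to be genuinely hard.'' That is the whole problem, and the mechanism you gesture at (a direct spectral-gap/Poincar\'e estimate controlling the merging) is not the one that actually works. In fact it points the wrong way: as discussed in \cref{r.fiid}, the spectral gap for Bernoulli shifts on nonamenable groups \emph{obstructs} approximation, bounding neighbour agreement away from $1$ in any factor-of-i.i.d.\ process of fixed intermediate density; this is precisely why the paper's construction is not a factor of i.i.d., and why one should not expect a clean quantitative merging estimate to fall out of the Kazhdan constant.

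The paper's use of property~(T) is soft and indirect. After the reduction of \cref{prop:sparse_cost} (for which your $\WUSF$ idea does appear, in \cref{r.WUSF}, as an alternative argument), the task becomes producing an invariant site percolation of arbitrarily small density $p$ with a unique infinite cluster. This is done by iterating on Bernoulli($p$) percolation: independently thin clusters with a tuned retention probability $q(p)$, take two independent copies, and form their union. The density stays exactly $p$ throughout, while the measures $\mu_i$ weak$^*$-converge to the non-ergodic mixture $p\,\delta_V+(1-p)\,\delta_\emptyset$ (\cref{cor:degeneration}). As long as no cluster has positive random-walk frequency, each $\mu_i$ remains weakly mixing and hence ergodic (\cref{prop:mixing}). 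Property~(T) now enters only via the Glasner--Weiss characterisation (\cref{t.GW}): the set of ergodic measures is weak$^*$-closed, so the ergodic sequence $(\mu_i)$ cannot converge to a non-ergodic limit. Hence at some finite stage $i_{\mathrm{freq}}$ a positive-frequency cluster must appear; selecting one such cluster yields the desired sparse unique infinite component. No quantitative control on $i_{\mathrm{freq}}$ is obtained or needed---the argument is by contradiction, not by a spectral estimate.
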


In fact, our proof gives slightly more: It is a classical result of Kazhdan that every countable Kazhdan group is finitely generated \cite[Theorem 1.3.1]{MR2415834}. The proof of \cref{thm:main}  establishes that for every $\eps>0$ and every finite symmetric generating set $S$ of $\Gamma$, there is a $\Gamma$-invariant measure on connected, spanning {\it subgraphs} of the associated Cayley graph with average degree at most $2+\eps$.   

Our proof will apply the following probabilistic characterization of property (T) due to Glasner and Weiss \cite{MR1475550}, which the reader may take  as the definition of property (T) for the purposes of this paper. 
Let $\Gamma$ be a countable group, and let $\Gamma \curvearrowright X$ be an action of $\Gamma$ by homeomorphisms  on a topological space $X$. We write 
$M(\Gamma,X)$ for the space of $\Gamma$-invariant Borel probability measures on $X$, which is equipped with the weak$^*$ topology, and write $E(\Gamma,X) \subseteq M(\Gamma,X)$ for the subspace of \emph{ergodic} $\Gamma$-invariant Borel probability measures on $X$. Here, we recall that an event $\sA \subseteq X$ is said to be \textbf{invariant} if $\gamma \sA = \sA$ for every $\gamma \in \Gamma$, and that a measure $\mu\in M(\Gamma,X)$ is said to be \textbf{ergodic} if $\mu(\sA) \in \{0,1\}$ for every invariant event $\sA$.

\begin{thm}[Glasner and Weiss 1997]\label{t.GW}
Let $\Gamma$ be a countably infinite group, and consider the natural action of $\Gamma$ on $\Omega=\{0,1\}^\Gamma$. Then the following are equivalent.
\begin{enumerate}
	\item $\Gamma$ has Kazhdan's property (T).
	\item $E(\Gamma,\Omega)$ is closed in $M(\Gamma,\Omega)$.
	\item $E(\Gamma,\Omega)$ is not dense in $M(\Gamma,\Omega)$.
\end{enumerate}
\end{thm}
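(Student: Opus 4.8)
The plan is to establish the cycle $(1)\Rightarrow(2)\Rightarrow(3)\Rightarrow(1)$. The implication $(2)\Rightarrow(3)$ is formal: for $p\neq q$ in $(0,1)$ the product measures $\mu_p,\mu_q$ (each coordinate independent $\mathrm{Bernoulli}(p)$, resp.\ $\mathrm{Bernoulli}(q)$) are mixing, hence ergodic, so $\tfrac12(\mu_p+\mu_q)\in M(\Gamma,\Omega)\setminus E(\Gamma,\Omega)$ and $E(\Gamma,\Omega)$ is a proper subset of $M(\Gamma,\Omega)$; if it is also closed then it equals its own closure and therefore fails to be dense. The real content lies in the two remaining implications.

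For $(1)\Rightarrow(2)$, fix a Kazhdan pair: a finite $K\subseteq\Gamma$ and $\kappa>0$ such that any unitary representation with no nonzero invariant vector moves some unit vector by at least $\kappa$ under some element of $K$. Let $\mu_n\in E(\Gamma,\Omega)$ converge weak$^*$ to $\mu$ and suppose, for contradiction, that $\mu$ is not ergodic, witnessed by an invariant event $\sA$ with $0<\mu(\sA)<1$. Since $\sA$ is invariant, $\mathbf 1_{\sA}$ is an exactly invariant vector in $L^2(\mu)$; approximating it to within $\delta$ by a cylinder function $\phi$ (these are dense in $L^2(\mu)$) gives $\|s\phi-\phi\|_{L^2(\mu)}\le 2\delta$ for every $s\in K$, because the shifts are $L^2(\mu)$-isometries fixing $\mathbf 1_{\sA}$. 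Now $\phi$ and each $s\phi-\phi$ are fixed bounded continuous functions on $\Omega$, so weak$^*$ convergence yields $\|s\phi-\phi\|_{L^2(\mu_n)}\to\|s\phi-\phi\|_{L^2(\mu)}$ and $\operatorname{Var}_{\mu_n}(\phi)\to\operatorname{Var}_{\mu}(\phi)$, the latter close to $\mu(\sA)(1-\mu(\sA))>0$. Centering $\phi$ in $L^2(\mu_n)$ thus produces, for all large $n$, a unit vector in $L^2_0(\mu_n)$ displaced by less than $\kappa$ under every element of $K$; since ergodicity of $\mu_n$ means the Koopman representation on $L^2_0(\mu_n)$ has no nonzero invariant vector, choosing $\delta$ small enough contradicts the Kazhdan inequality. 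Hence $\mu$ is ergodic and $E(\Gamma,\Omega)$ is closed.

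For $(3)\Rightarrow(1)$ I would argue the contrapositive: if $\Gamma$ lacks (T) then $E(\Gamma,\Omega)$ is dense. The Connes--Weiss construction—a Gaussian action built from almost invariant vectors of a representation with no nonzero invariant vector—shows that a non-Kazhdan group admits an ergodic p.m.p.\ system $(Y,\nu)$ carrying a nontrivial asymptotically invariant sequence of sets $A_n$ (so $\nu(sA_n\triangle A_n)\to0$ for each $s$ while $\nu(A_n)$ stays away from $0$ and $1$). Such sequences are exactly what let one approximate non-ergodic measures by ergodic ones. Since, by Krein--Milman, finite convex combinations $\sum_i c_i\rho_i$ of ergodic measures $\rho_i\in E(\Gamma,\Omega)$ are weak$^*$ dense in $M(\Gamma,\Omega)$, it is enough to approximate a single such mixture by one ergodic measure. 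The idea is to use the nearly invariant sets to interpolate spatially between the components $\rho_i$—assigning roughly a $c_i$-fraction of space to $\rho_i$ in an almost $\Gamma$-equivariant way—so that on any fixed finite window the marginals converge to those of $\sum_i c_i\rho_i$, while the genuine ergodicity of the driving system (the sets being only asymptotically, not exactly, invariant) forces the constructed measure to be ergodic.

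The genuine obstacle is this interpolation step, which is what promotes ``$E(\Gamma,\Omega)$ is not closed'' to ``$E(\Gamma,\Omega)$ is dense''—equivalently, it identifies $M(\Gamma,\Omega)$ as a Poulsen simplex rather than a Bauer simplex. Producing a single non-ergodic limit of ergodic measures is comparatively cheap (the asymptotically invariant sequence already does it), but diffusing an \emph{arbitrary} invariant measure into nearby ergodic ones with controlled finite-window marginals is the crux; so too is the input that failure of (T) actually yields such asymptotically invariant sequences, which is the heart of the Connes--Weiss theorem and requires the delicate reduction to a weakly mixing (hence ergodic) Gaussian action. By contrast $(1)\Rightarrow(2)$ and $(2)\Rightarrow(3)$ are soft, resting only on the Kazhdan constant and the weak$^*$ continuity of $\mu\mapsto\int g\,d\mu$ for continuous $g$.
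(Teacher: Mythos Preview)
The paper does not prove this theorem at all: it is quoted as a result of Glasner and Weiss \cite{MR1475550} and used as a black box (indeed, the paper says the reader ``may take [it] as the definition of property (T) for the purposes of this paper''). So there is no paper proof to compare against.

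That said, your sketch of $(1)\Rightarrow(2)$ and $(2)\Rightarrow(3)$ is correct and is essentially the Glasner--Weiss argument: the key point, which you handle cleanly, is that cylinder functions are \emph{continuous} on $\Omega$, so the relevant $L^2$-quantities (variance, $\|s\phi-\phi\|^2$) pass to weak$^*$ limits, allowing the Kazhdan constant for $L^2_0(\mu_n)$ to be violated for large $n$.

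Your treatment of $(3)\Rightarrow(1)$ is an honest outline rather than a proof. You correctly identify the two nontrivial inputs --- the Connes--Weiss construction of an ergodic action with a nontrivial asymptotically invariant sequence whenever $\Gamma$ lacks (T), and the ``interpolation'' step that upgrades this to density of $E(\Gamma,\Omega)$ --- but you do not actually carry out the latter. In Glasner--Weiss this step is done via an explicit skew-product/relative-Bernoulli construction over the Connes--Weiss system, and the verification that the resulting measure is ergodic and has the right cylinder marginals is where the work lies. Your paragraph describes the shape of that argument accurately but stops short of executing it, so as written the implication $(3)\Rightarrow(1)$ remains a gap.
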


See e.g.\ \cite{MR2415834} for further background on Kazhdan groups.


It remains open if Kazhdan groups have \emph{fixed price $1$}, i.e., if \emph{every} free ergodic p.m.p.\ action has cost $1$. (In contrast, \cref{thm:main}  implies that there \emph{exists} a free, ergodic p.m.p.\ action with cost $1$, see \cite[Proposition 29.1]{MR2095154}.)
 Ab\'ert and Weiss \cite{AbW} proved that Bernoulli actions have maximal cost among all free ergodic p.m.p.\ actions of a given group, and probabilistically this means that the maximal cost of the free ergodic p.m.p.\ actions of  a countable group $\Gamma$ is equal to
\begin{equation}
\label{eq:upper_cost_def}
\cost^*(\Gamma) = \frac{1}{2} \inf \left\{ \int_{\omega \in \spann(\Gamma)} \deg_\omega(o) \dif \mu(\omega) : \mu \in F_{\mathrm{IID}}(\Gamma,\spann(\Gamma)) \right\},
\end{equation}
where $F_{\mathrm{IID}}(\Gamma,\spann(\Gamma))\subseteq M(\Gamma,\spann(\Gamma))$ is the set of $\Gamma$-invariant measures on $S(\Gamma)$ that arise as \emph{factors of i.i.d.}\ processes on $\Gamma$.
Our construction is very far from being a factor of i.i.d., and therefore seems unsuitable to study $\cost^*(\Gamma)$. See Remark~\ref{r.fiid} for further discussion. The question of fixed price $1$ for Kazhdan groups is of particular interest due to its connection to the Ab\'ert-Nikolov rank gradient conjecture \cite[Conjecture 17]{MR2966663}.


An extension of our results to groups with \emph{relative} property (T) is sketched in \cref{s.rel}.

\section{Proof}

\subsection{A reduction}\label{ss.redu}

We begin our proof with the following proposition, which shows that it suffices for us to find sparse random graphs on $\Gamma$ that have a unique infinite connected component. 
We define $\conn(\Gamma) \subseteq \{0,1\}^{\Gamma \times \Gamma}$ to be the set of graphs on $\Gamma$ that have a unique infinite connected component. 

\begin{prop}
\label{prop:sparse_cost}
Let $\Gamma$ be an infinite, finitely generated group. Then
\[
\cost(\Gamma) \leq 1 + \frac{1}{2} \inf \left\{ \int_{\omega \in \conn(\Gamma)} \deg_\omega(o) \dif \mu(\omega) : \mu \in M(\Gamma,\conn(\Gamma)) \right\}.
\]
\end{prop}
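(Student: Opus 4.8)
The plan is to show that every $\mu \in M(\Gamma,\conn(\Gamma))$ can be upgraded to a $\Gamma$-invariant measure on $\spann(\Gamma)$ at an additive cost of at most $1$. Concretely, I would construct a measurable, $\Gamma$-equivariant map $\Phi\colon \conn(\Gamma)\to\spann(\Gamma)$ that only \emph{adds} edges, i.e.\ $\omega\subseteq\Phi(\omega)$, and set $\nu=\Phi_*\mu$. Since $\Phi$ is equivariant and $\mu$ is invariant, $\nu$ is $\Gamma$-invariant, and since $\Phi(\omega)$ is connected and spanning it is supported on $\spann(\Gamma)$; hence $\cost(\Gamma)\le\frac12\int\deg_\omega(o)\dif\nu(\omega)$. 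Writing $\eta(\omega)=\Phi(\omega)\setminus\omega$ for the added edges, we have $\deg_{\Phi(\omega)}(o)=\deg_\omega(o)+\deg_{\eta(\omega)}(o)$, so it suffices to arrange that $\E_\mu[\deg_{\eta}(o)]\le 2$.

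For the construction, fix a finite symmetric generating set $S$ together with a fixed linear order on $S$, and let $G$ be the Cayley graph. Given $\omega\in\conn(\Gamma)$, let $K=K(\omega)$ be its unique infinite cluster and set $\rho(v)=d_G(v,K)$; this is finite for every $v$ because $G$ is connected and $K\neq\emptyset$. For each $v\notin K$ I would select a \emph{parent} $\pi(v)=vs$, where $s\in S$ is smallest in the fixed order among the generators satisfying $\rho(vs)=\rho(v)-1$ (such $s$ exists along any geodesic toward $K$), and put $\eta(\omega)=\{\{v,\pi(v)\}:v\notin K\}$. Following parent edges strictly decreases $\rho$, so every vertex reaches $K$ in finitely many steps; as $K$ is connected, $\Phi(\omega)=\omega\cup\eta(\omega)$ is connected and spanning. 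Left-invariance of $G$ and of the cluster decomposition gives $\rho_{\gamma\omega}(\gamma v)=\rho_\omega(v)$ and $\pi_{\gamma\omega}(\gamma v)=\gamma\pi_\omega(v)$, so $\Phi$ is equivariant, as required.

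It then remains to bound $\E_\mu[\deg_\eta(o)]$. Each $v\notin K$ contributes exactly one edge $\{v,\pi(v)\}$, and these edges are distinct: a mutual relation $v=\pi(w)$, $w=\pi(v)$ is impossible since $\rho$ strictly decreases along $\pi$. Consequently $\deg_\eta(o)=\mathbbm{1}[o\notin K]+\#\{v:\pi(v)=o\}$, the two terms counting $o$'s own parent edge and the edges to $o$'s children. Transporting unit mass from each vertex to its parent defines a diagonally invariant kernel, so the mass-transport principle yields $\E_\mu[\#\{v:\pi(v)=o\}]=\P_\mu(o\notin K)$, whence $\E_\mu[\deg_\eta(o)]=2\,\P_\mu(o\notin K)\le 2$. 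Combining with the previous paragraph gives $\frac12\int\deg_{\Phi(\omega)}(o)\dif\mu\le 1+\frac12\int\deg_\omega(o)\dif\mu$, and taking the infimum over $\mu\in M(\Gamma,\conn(\Gamma))$ completes the proof.

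The point that matters most is the \emph{uniqueness} of the infinite cluster: it is precisely uniqueness that makes ``route every vertex toward $K$'' produce a connected graph, since with several infinite clusters the routed edges would leave those clusters mutually disconnected, and it is what guarantees $\rho<\infty$ everywhere. The only other delicate points are the equivariance of the tie-breaking rule — which I handle by ordering the generators $S$ rather than the vertices, so that no auxiliary i.i.d.\ randomness is needed — and the elementary mass-transport identity that pins the added expected degree at exactly $2\,\P_\mu(o\notin K)$.
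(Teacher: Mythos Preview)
Your proof is correct and follows essentially the same route as the paper: route each vertex outside the unique infinite cluster $K$ toward $K$ along a Cayley-graph edge that decreases the distance, add those edges, and use the mass-transport principle to show the expected added degree is $2\,\P_\mu(o\notin K)\le 2$. The only cosmetic difference is that the paper breaks ties by choosing the parent \emph{uniformly at random} among eligible neighbours, whereas you use a deterministic rule based on a fixed ordering of $S$; both choices are $\Gamma$-equivariant and give the same bound.
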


\cref{prop:sparse_cost} can be easily deduced from the \emph{induction formula} of Gaboriau \cite[Proposition II.6]{Gabcout}. We provide a direct proof for completeness.

\begin{proof} 
 Take a Cayley graph $G$ corresponding to a finite symmetric generating set of $\Gamma$. 
Let $\mu \in M(\Gamma,\conn(\Gamma))$, let $\omega$ be a random variable with law $\mu$, and let $\eta_0$ be the set of vertices of its unique infinite connected component.  For each $i\ge 1$, let $\eta_i$ be the set of vertices in $G$ that have graph distance exactly $i$ from $\eta_0$ in $G$. Note that $\bigcup_{i\geq 0} \eta_i = \Gamma$, and that if $i\geq 1$ then every vertex in $\eta_i$ has at least one neighbour in $\eta_{i-1}$. 
For each $i\geq 1$ and each vertex $v \in \eta_i$, let $e^\rightarrow(v)$ be chosen uniformly at random from among those oriented edges of $G$ that begin at $v$ and end at a vertex of $\eta_{i-1}$, and let $e(v)$ be the \emph{unoriented} edge obtained by forgetting the orientation of $e^\rightarrow(v)$. These choices are made independently conditional on $\omega$. We define $\zeta =\{e(v) : v\in V\setminus \eta_0\}$ and define $\nu$ to be the law of $\xi=\omega \cup \zeta$. 
 We clearly have that $\xi$ is in $\spann(\Gamma)$ whenever $\omega \in \conn(\Gamma)$, and hence that $\nu \in M(\Gamma,\spann(\Gamma))$. 
 On the other hand, the mass-transport principle (see \cite[Section 8.1]{LP:book}) implies that, writing $\P$ and $\E$ for probabilities and expectations taken with respect to the joint law of $\omega$ and $\{e(v) : v \in V \setminus \eta\}$,
\[
\E \deg_\zeta(o) = \P(o\notin \eta_0) + \E \sum_{v\in V} \mathbbm{1}\bigl(v\notin \eta_0,\, e^\rightarrow(v)^+\!=o\bigr)  = 2\P(o\notin \eta_0) \leq 2,
\]
where $e^\rightarrow(v)^+$ denotes the other endpoint of $e^\rightarrow(v)$.
We deduce that
\[
\int_{\xi \in \spann(\Gamma)} \deg_\xi(o) \dif \nu(\xi) = \E \deg_\zeta(o)+ \E \deg_\omega(o)  \leq 2 + \int_{\omega \in \conn(\Gamma)} \deg_\omega(o) \dif \mu(\omega),
\]
and the claim follows by taking the infimum over $\mu \in M(\Gamma,\conn(\Gamma))$.
\end{proof}

\begin{remark}\label{r.WUSF} An arguably more canonical way to prove \cref{prop:sparse_cost} is to take the union of $\omega$ with an independent copy of the \emph{wired uniform spanning forest} ($\WUSF$) of the Cayley graph $G$. Indeed, it is clear that \emph{some} components of $\WUSF$ must intersect the infinite component of $\omega$ a.s., and it follows by indistinguishability of trees in $\WUSF$ \cite{HN15a} that \emph{every} tree intersects the infinite component of $\omega$ a.s., so that the union of $\WUSF$ with $\omega$ is a.s.\ connected. (It should also be possible to argue that this union is connected more directly, using Wilson's algorithm \cite{Wilson96,BLPS}.) The result then follows since $\WUSF$ has expected degree $2$ in any transitive graph \cite[Theorem 6.4]{BLPS}.
 
This alternative construction may be of interest for the following reason: It is well known \cite[Question 10.12]{LP:book} that an affirmative answer to Question~\ref{q.costbeta} would follow if one could construct for every $\eps>0$ an invariant coupling $(\FUSF,\eta)$ of the \emph{free uniform spanning forest} of a Cayley graph of $\Gamma$ with a percolation process $\eta$ of density at most $\eps$ such that $\FUSF\cup\eta \in S(\Gamma)$ almost surely. Since Kazhdan groups have $\beta_1=0$, their free and wired uniform spanning forests always coincide \cite[Section 10.2]{LP:book}, so that proving \cref{thm:main} via this alternative proof of Proposition~\ref{prop:sparse_cost} can be seen as a realization of this possibly general strategy.
\end{remark}

\subsection{A construction}\label{ss.constru}

We now construct an invariant measure $\mu \in M(\Gamma,\conn(\Gamma))$ with arbitrarily small expected degree. We will work on an arbitrary Cayley graph of the Kazhdan group $\Gamma$, and the measure we construct will be concentrated on subgraphs of this Cayley graph. (Recall from the introduction that countable Kazhdan groups are always finitely generated.)

Let $G=(V,E)$ be a connected, locally finite graph. 
For each $\omega\in \{0,1\}^V$, the \textbf{clusters} of $\omega$ are defined to be the vertex sets of the connected components of the subgraph of $G$ induced by the vertex set $\{v\in V: \omega(v)=1\}$ (that is, the subgraph of $G$ with vertex set $\{v\in V: \omega(v)=1\}$ and containing every edge of $G$ both of whose endpoints belong to this set). 
Fix $p\in (0,1)$, and let $\mu_1$ be the law of Bernoulli-$p$ site percolation on $G$. For each $i\geq 1$, we recursively define $\mu_{i+1}$ to be the law of the random configuration $\omega \in \{0,1\}^V$ obtained as follows:
\begin{enumerate}
	\item Let $\omega_1,\omega_2\in \{0,1\}^V$ be independent random variables each with law $\mu_i$.
	\item Let $\eta_1$ and $\eta_2$ be obtained from $\omega_1$ and $\omega_2$ respectively by choosing to either delete or retain each cluster independently at random
 with retention probability 
	\[ q(p) := \frac{1-\sqrt{1-p}}{p} \in \left(\frac{1}{2},1\right).\]
	\item Let $\omega$ be the union of the configurations $\eta_1$ and $\eta_2$.
\end{enumerate}
It follows by  induction that if $G$ is a Cayley graph of a finitely generated group $\Gamma$ then $\mu_i \in M(\Gamma,\Omega)$ for every $i\geq 1$. More generally, for each measure $\mu$ on $\{0,1\}^V$ and $q\in [0,1]$ we write $\mu^q$ for the \textbf{$q$-thinned} measure, which is the law of the random variable $\eta$ obtained by taking a random variable $\omega$ with law $\mu$ and choosing to either delete or retain each cluster of $\omega$ independently at random with retention probability $q$. (See \cite[Section 6]{LS99} for a more formal construction of this measure.)

We write $\delta_V$ and $\delta_\emptyset$ for the probability measures on $\{0,1\}^V$ giving all their mass to the all $1$ and all $0$ configurations respectively.

\begin{prop}
\label{cor:degeneration}
Let $G=(V,E)$ be a connected, locally finite graph, let $p\in (0,1)$ and let $(\mu_i)_{i\geq 1}$ be as above. Then $\mu_i(\{\omega: \omega(u) =1\})=p$ for every $i\geq 1$ and $u\in V$ and 
$\mu_i$ weak$^*$ converges to the measure $p \delta_V + (1-p) \delta_\emptyset$ as $i\to\infty$.
\end{prop}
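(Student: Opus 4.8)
The plan is to prove the two assertions separately: the marginal identity by a one-line induction, and the weak$^*$ convergence by reducing everything to the growth of a single two-point connectivity probability.

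For the marginal, I would induct on $i$. The case $i=1$ is immediate, and if $\mu_i(\{\omega(u)=1\})=p$ for every $u$, then each of the two independent $q$-thinnings $\eta_1,\eta_2$ used to build $\mu_{i+1}$ satisfies $\mu_i^{q}(\{\eta(u)=1\})=pq$, so by independence $\mu_{i+1}(\{\omega(u)=1\})=1-(1-pq)^2$. The value $q=q(p)$ is chosen precisely so that $(1-pq)^2=1-p$, i.e.\ $1-(1-pq)^2=p$, which closes the induction. For the convergence, since cylinder events form a convergence-determining class it suffices to show that for every finite $W\subseteq V$ and every $\sigma\in\{0,1\}^W$ the mass $\mu_i(\{\omega|_W=\sigma\})$ tends to $p\,\mathbbm{1}[\sigma\equiv 1]+(1-p)\mathbbm{1}[\sigma\equiv 0]$. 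Because all one-point marginals already equal $p$, a union bound via $\mu_i(\omega|_W\text{ non-constant})\le\sum_{u,v\in W}\mu_i(\{\omega(u)\neq\omega(v)\})$ pins the masses of the two constant configurations to $p$ and $1-p$, and thus reduces the whole statement to the single claim that $\mu_i(\{\omega(u)=1,\omega(v)=0\})\to 0$ for each pair $u\neq v$.

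The key observation, which lets me bypass any direct analysis of this disagreement probability, is the following. Let $s_i:=\mu_i(\{u\llra v\})$ denote the probability that $u$ and $v$ lie in a common cluster. Since $u\llra v$ forces $\omega(u)=1$, we have $\mu_i(\{\omega(u)=1,\ u\not\llra v\})=p-s_i$, and since $\{\omega(u)=1,\omega(v)=0\}\subseteq\{\omega(u)=1,\ u\not\llra v\}$ I obtain the clean bound
\[
\mu_i(\{\omega(u)=1,\omega(v)=0\})\le p-s_i .
\]
Hence it suffices to prove that $s_i\to p$. To control $s_i$ I would use the monotone lower bound $s_{i+1}\ge f(s_i)$, where $f(x):=1-(1-qx)^2$. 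This holds because $u$ and $v$ already lie in a common cluster of $\omega=\eta_1\cup\eta_2$ whenever they do so in $\eta_1$ alone or in $\eta_2$ alone; the probability that they are connected in a given thinned copy $\eta_j$ is exactly $qs_i$ (they must be connected in $\omega_j$, probability $s_i$, and the common cluster must be retained, probability $q$), and the two copies are independent. The map $f$ is strictly increasing on $[0,p]$, fixes $0$ and $p$, and satisfies $f(x)-x=q^2x(p-x)>0$ on $(0,p)$. Since $G$ is connected, a fixed path from $u$ to $v$ is entirely open with positive probability under $\mu_1$, so $s_1>0$; comparing $(s_i)$ with the orbit of $f$ started from $s_1$ (using monotonicity of $f$ and the a priori bound $s_i\le p$) gives $s_i\uparrow p$. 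Combined with the displayed inequality this yields the disagreement probabilities tending to $0$, completing the proof.

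The main obstacle is the penultimate step, namely showing that the common-cluster probability $s_i$ increases all the way to $p$; everything else is bookkeeping. The idea that resolves it is that connectivity can only be helped by taking unions, so $s_i$ dominates the iterates of the very same scalar map $f$ that governs the one-point density, and $p$ is the unique attracting fixed point of $f$ on $(0,1)$. It is worth emphasising that connectivity of $G$ enters the argument only once, through the positivity of $s_1$, and that I never need the precise evolution of the disagreement probability itself.
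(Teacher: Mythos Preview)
Your proof is correct and follows essentially the same route as the paper's. The paper tracks $\sigma_i(u,v)=\mu_i(\omega(u)=\omega(v)=1)$ for \emph{adjacent} $u,v$ (where this event coincides with $\{u\llra v\}$), derives the same lower bound $\sigma_{i+1}\ge\phi(\sigma_i)$ with $\phi(x)=2qx-q^2x^2=f(x)$, and concludes via $\mu_i(\omega(u)\neq\omega(v))=2(p-\sigma_i)\to 0$; your version differs only cosmetically in that you work with the connectivity event $\{u\llra v\}$ for arbitrary pairs and bound the disagreement probability by $p-s_i$ via an inclusion rather than an identity.
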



\begin{proof}
It suffices to prove that for every pair of adjacent vertices $u,v \in V$ we have that
\[
\mu_i(\{\omega: \omega(u) =1\})=p
\quad \text{ for every $i\geq 1$ } \quad \text{ and } \quad 
\lim_{i\to\infty}\mu_i\bigl(\{\omega: \omega(u)=\omega(v)\}\bigr) = 1.
\]
For each $u,v\in V$ and $i\geq 1$ let $p_i(u) = \mu_i(\{\omega: \omega(u) =1\})$ and let $\sigma_i(u,v) = \mu_i(\{\omega: \omega(u) = \omega(v)=1\})$. Note that $p_1(u)=p$ for every $u\in V$, that $\sigma_1(u,v) = p^2 >0$ for every $u,v\in V$, and that $\sigma_i(u,v) \leq p_i(u)$ for every $u,v\in V$ and $i\geq 1$.
 Write $q=q(p)$. 
For each $i\geq 1$ and $u\in V$, it follows by definition of $\mu_{i+1}$ that
\begin{equation}
p_{i+1}(u)\\=(1-(1-q)^2) \, p_i(u)^2 + 2q \, p_i(u) \, (1-p_i(u))\\ = \phi\big( p_i(u) \big),
\end{equation}
where $\phi:\R\lora \R$ is the polynomial
\[
\phi(x) := (2q-q^2)x^2+2qx(1-x) = 2qx-q^2 x^2.
\]
It follows by elementary analysis that $\phi$ is strictly increasing and concave on $(0,p)$, with $\phi(0)=0$ and $\phi(p)=p$. Thus, we deduce by induction that $p_i(u)=p$ for every $i\geq 1$ and $u\in V$ as claimed. Similarly, for each $i \geq 1$ and adjacent $u,v \in V$ we have by definition of $\mu_{i+1}$ that 
\begin{align*}
\sigma_{i+1}(u,v)
&=
(1-(1-q)^2) \, \mu_i\bigl(\omega(u) = \omega(v)=1)^2
\\
&\hspace{3.5cm}
+ 2q \, \mu_i\bigl(\omega(u) = \omega(v) = 1\bigr) \,  (1-\mu_i\bigl(\omega(u) = \omega(v)=1))\\
&\hspace{5.8cm}+2q^2\mu_i\bigl(\omega(u) =1, \omega(v)=0\bigr) \, \mu_i\bigl(\omega(u) =0, \omega(v)=1\bigr)
\\ 
&= \phi(\sigma_i(u,v))+ 
2q^2\mu_i\bigl(\omega(u) =1, \omega(v)=0\bigr) \, \mu_i\bigl(\omega(u) =0, \omega(v)=1\bigr)\\ 
&\geq \phi(\sigma_i(u,v)),
\end{align*}
where we have used that fact that if $\omega(u)=\omega(v)=1$ then $u$ and $v$ are in the same cluster of $\omega$. 
Since $\phi$ is strictly increasing and concave on $(0,p)$, with the only fixed points $0$ and $p$, and since $\sigma_1(u,v)>0$, it follows that $\sigma_i(u,v) \uparrow p$ as $i\to \infty$. The claim now follows since
\begin{multline*}
\mu_i(\omega(u) \neq \omega(v)) = 
\mu_i(\omega(u) =1, \omega(v) =0)
+
\mu_i(\omega(u) =0, \omega(v) =1)
=
2p\left(1-\frac{\sigma_i(u,v)}{p}\right),
\end{multline*}
which tends to zero as $i\to \infty$.
\end{proof}

See \cref{fig:Z2,fig:Z3} for simulations of the measures $\mu_i$ on $\Z^2$ and $\Z^3$.

\subsection{Ergodicity and condensation}

On Cayley graphs of infinite Kazhdan groups, \cref{cor:degeneration} will be useful only if we also know something about the ergodicity of the measures $\mu_i$. To this end, we will apply some tools introduced by Lyons and Schramm \cite{LS99} that give sufficient conditions for ergodicity of $q$-thinned processes.
The first such lemma, which is proven in \cite[Lemma 4.2]{LS99} and is based on an argument of Burton and Keane \cite{burton1989density}, shows that every cluster of an invariant percolation process has an invariantly-defined  \emph{frequency} as measured by an independent random walk. Moreover, conditional on the percolation configuration, the frequency of each cluster is non-random and does not depend on the starting point of the random walk.

\begin{lemma}[Cluster frequencies]\label{l.freq}
Let $G=(V,E)$ be a Cayley graph of an infinite, finitely generated group $\Gamma$. There exists a Borel measurable, $\Gamma$-invariant function $\freq :\{0,1\}^V \to [0,1]$ with the following property. Let $\mu\in M(\Gamma,\Omega)$ be an  invariant site percolation, and let $\omega$ be a random variable with law $\mu$. Let $v$ be a vertex of $G$ and let $\P_v$ be the law of simple random walk $\{X_n\}_{n\ge 0}$ on $G$ started at $v$. Then
\begin{equation}\label{e.freq}
\lim_{N\to\infty} \frac{1}{N} \sum_{n=0}^{N-1} \mathbbm{1}_{\{X_n \in C\}} = \freq(C) \qquad \text{ for every cluster $C$ of $\omega$}
\end{equation}
$\mu \otimes \P_v$-almost surely. 
\end{lemma}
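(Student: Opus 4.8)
The plan is to reduce everything to the behaviour of a single simple random walk and then run the pointwise ergodic theorem on the \emph{environment seen from the walker}. Since $G$ is a Cayley graph we may identify $V$ with $\Gamma$ and take $o$ to be the identity; write the walk as $X_n = X_0 s_1\cdots s_n$ with $s_1,s_2,\ldots$ i.i.d.\ uniform on the symmetric generating set $S$. The key object is $\tilde\omega_n := X_n^{-1}\omega$, which satisfies $\tilde\omega_{n+1}=s_{n+1}^{-1}\tilde\omega_n$. Because $\mu$ is $\Gamma$-invariant and $S$ is symmetric, $\mu$ is a reversible stationary measure for this Markov chain on $\Omega=\{0,1\}^\Gamma$. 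First I would apply Birkhoff's pointwise ergodic theorem: for every bounded measurable $f\colon\Omega\to\R$, the averages $\frac1N\sum_{n<N}f(\tilde\omega_n)$ converge $\mu\otimes\P_o$-a.s.\ to a limit measurable with respect to the invariant $\sigma$-field $\mathcal I$ of the chain. Taking $f(\eta)=\eta(o)$ shows that the \emph{total} frequency of occupied sites $\frac1N\sum_{n<N}\mathbbm{1}\{X_n\text{ occupied}\}$ converges; this will later serve as the sum of the individual cluster frequencies.

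Next I would dispose of finite clusters. On any infinite transitive graph the walk is transient or null-recurrent, so $p_n(o,o)\to 0$, and a standard renewal/ergodic argument gives that the Cesàro frequency of any fixed vertex, hence of any fixed finite set, is $0$ almost surely. Thus $\freq(C)=0$ for finite $C$, and only infinite clusters can carry positive frequency.

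The heart of the matter is the occupation frequency of a single infinite cluster $C$, and here lies the main obstacle. Unlike the total occupied frequency, the indicator $\mathbbm{1}\{X_n\in C\}$ is \emph{not} a function of $\tilde\omega_n$ alone: recognising the fixed cluster $C$ from the re-rooted configuration would require knowing where $X_n^{-1}$ sends a point of $C$, and the environment at time $n$ does not record this. Consequently Birkhoff cannot be applied to $C$ directly. To get around this I would combine the ergodic theorem with the mass-transport principle in the spirit of Burton and Keane (here $G$ is automatically unimodular): exhaust $C$ by finite connected sets, use $\Gamma$-equivariant environment functionals that detect membership in the cluster of the origin together with transported mass accounting for \emph{which} cluster is currently being visited, and control the error as the exhaustion grows. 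This should yield a.s.\ convergence of $\frac1N\sum_{n<N}\mathbbm{1}\{X_n\in C\}$, with the individual frequencies summing to the total produced in the first step. I expect this localisation step to be the genuinely delicate point, precisely because it is where the clean ``function of the particle's environment'' structure breaks down.

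Finally I would establish the two qualitative assertions. For determinism given $\omega$, I would show that the almost sure limit, being $\mathcal I$-measurable, in fact reduces to a function of $\omega=\tilde\omega_0$ alone; the input here is reversibility together with invariance/unimodularity, which prevent the limit from depending on the escape behaviour of the walk. Independence of the starting vertex $v$ is where I would again invoke mass transport: coupling walks from two vertices in the same cluster, their occupation frequencies of $C$ must coincide, and transporting mass between basepoints shows the common value is independent of $v$. The function $\freq\colon\{0,1\}^V\to[0,1]$ is then defined by feeding it the indicator $\mathbbm{1}_C$ of a cluster, and its $\Gamma$-invariance $\freq(\gamma\omega)=\freq(\omega)$ follows from the $\Gamma$-equivariance of simple random walk, which transports occupation frequencies along the group action.
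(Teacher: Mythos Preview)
The paper does not prove this lemma; it cites \cite[Lemma 4.2]{LS99}. What follows is an assessment of your outline on its own terms.

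You correctly set up the environment-from-the-particle process and correctly isolate the real obstacle: $\mathbbm{1}\{X_n\in C\}$ for a \emph{fixed} cluster $C$ is not a function of $X_n^{-1}\omega$ alone, so Birkhoff does not apply directly. The gap is that your proposed resolution --- ``exhaust $C$ by finite connected sets'', ``transported mass accounting for which cluster is being visited'', ``in the spirit of Burton and Keane'' --- does not describe a verifiable mechanism; you yourself flag this step as the delicate one without actually supplying the argument. One clean way to close the gap (and one that meshes nicely with the $q$-thinning used elsewhere in the paper) is to augment $\omega$ with extra i.i.d.\ randomness so as to manufacture $\Gamma$-\emph{invariant} random sets that separate clusters. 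Concretely: given $\omega$, retain each cluster independently with probability $1/2$ to obtain $\eta$; the pair $(\omega,\eta)$ is $\Gamma$-invariant, so Birkhoff does give a.s.\ convergence of $\frac{1}{N}\sum_{n<N}\eta(X_n)=\sum_C \xi_C\,A_C(N)$, where $A_C(N)=\frac{1}{N}\sum_{n<N}\mathbbm{1}\{X_n\in C\}$ and the $\xi_C$ are, conditionally on $\omega$, i.i.d.\ fair coins. Flipping a single $\xi_{C_0}$ is a measure-preserving involution on the coin space, so for a.e.\ coin sequence both the original and the flipped sum converge; subtracting yields convergence of $A_{C_0}(N)$. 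This is the missing idea your outline gestures at but never pins down.

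Two smaller points. Determinism of the limit given $\omega$ comes most directly from the Hewitt--Savage $0$--$1$ law (the Ces\`aro limit is tail-measurable in the i.i.d.\ increment sequence), not from ``reversibility together with unimodularity''. And your route to starting-vertex independence via ``coupling walks from two vertices'' does not work on non-Liouville Cayley graphs such as free groups, where independent simple random walks need not merge. A robust argument instead combines the determinism just obtained with the Markov property: the walk from $o$ reaches any fixed $v$ with positive probability, and on that event its continuation is a walk from $v$; since both limits are deterministic functions of $(\omega,C)$, they must coincide.
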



This notion of frequency is used in the next proposition, which is a slight variation on \cite[Lemma 6.4]{LS99}. We define $\sF\subseteq \{0,1\}^V$ to be the event that there exists a cluster of positive frequency. Note that the $\Gamma$-invariance and Borel measurability of $\freq$ implies that $\sF$ is $\Gamma$-invariant and Borel measurable also.

\begin{prop}[Ergodicity of the $q$-thinning]\label{p.qthin}
Let $G=(V,E)$ be a Cayley graph of an infinite, finitely generated group $\Gamma$, and let $\mu\in E(\Gamma,\Omega)$ be an ergodic invariant site percolation such that $\mu(\sF)=0$.  Then the $q$-thinned measure $\mu^q$ is also ergodic for every $q\in [0,1]$. Similarly, if we have $k$ measures $\nu_1,\ldots,\nu_k \in E(\Gamma,\Omega)$ such that $\nu_i(\sF)=0$ for every $1\leq i \leq k$ and  $\nu_1\otimes\dots\otimes\nu_k$ is ergodic, then $\nu_1^q\otimes\dots\otimes\nu_k^q$ is also ergodic for every $q\in [0,1]$.
\end{prop}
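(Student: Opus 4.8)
The plan is to verify ergodicity of $\mu^q$ directly, by showing that every $\Gamma$-invariant event $\sA \subseteq \{0,1\}^V$ satisfies $\mu^q(\sA) \in \{0,1\}$. I would realize $\mu^q$ as follows: let $\omega$ have law $\mu$, let $(t_C)_C$ be i.i.d.\ Bernoulli$(q)$ coins indexed by the clusters of $\omega$, and let $\eta = \eta(\omega, (t_C))$ be the union of the retained clusters, so that $\eta$ has law $\mu^q$. The strategy is to argue that, conditionally on $\omega$, the indicator $\mathbbm 1_\sA(\eta)$ does not depend on the coins, and is therefore a $\Gamma$-invariant function of $\omega$ alone; ergodicity of $\mu$ then forces it to be constant.

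The heart of the matter would be the following single-coin-flip claim: for each fixed cluster $C$ of $\omega$, flipping the coin $t_C$ — equivalently, replacing $\eta$ by $\eta \triangle C$ — almost surely does not change whether $\eta \in \sA$. To establish it I would run an independent simple random walk $\{X_n\}$ and compare the empirical averages $\frac1N\sum_{n<N} f(X_n^{-1}\eta)$ and $\frac1N\sum_{n<N} f(X_n^{-1}(\eta\triangle C))$ for a cylinder function $f$ supported on a ball $B(o,R)$. These two averages differ by at most $\frac{2\|f\|_\infty}{N}\#\{n<N : X_n \in C^{+R}\}$, where $C^{+R}$ is the $R$-neighbourhood of $C$, since $X_n^{-1}\eta$ and $X_n^{-1}(\eta\triangle C)$ agree on $B(o,R)$ unless the walk is within distance $R$ of $C$ at time $n$. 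Because $\mu(\sF)=0$, \cref{l.freq} gives $\freq(C)=0$, and a short occupation comparison — each visit to $C^{+R}$ is followed within $R$ steps by a visit to $C$ with probability at least $(\deg G)^{-R}$ — should upgrade this to $\frac1N\#\{n<N : X_n \in C^{+R}\}\to 0$ almost surely. Hence the two empirical averages would have the same limit for every cylinder $f$. Since the $\Gamma$-invariant $\sigma$-field is generated, modulo null sets, by the almost sure limits $\bar f(\xi)=\lim_N \frac1N\sum_{n<N} f(X_n^{-1}\xi)$ of such averages — the standard identification of the invariant field with the ergodic averages of the environment process, valid for any invariant measure — the configurations $\eta$ and $\eta\triangle C$ lie in the same invariant events, and in particular $\mathbbm 1_\sA(\eta)=\mathbbm 1_\sA(\eta\triangle C)$.

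With the single-coin-flip claim in hand, the proof would conclude quickly. There are only countably many clusters, so almost surely the value of $\mathbbm 1_\sA(\eta)$ is unchanged by flipping any one coin, and hence, by composition, by flipping any finite set of coins. The map $(t_C)\mapsto \mathbbm 1_\sA(\eta(\omega,(t_C)))$ is therefore, for $\mu$-a.e.\ $\omega$, a measurable $\{0,1\}$-valued function that is almost surely invariant under finite coordinate flips; since $q\in(0,1)$ makes such flips mutually absolutely continuous, this function is tail-measurable, and hence almost surely constant in the coins by the Kolmogorov $0$--$1$ law. Writing $\tilde h(\omega)$ for this constant, we would have $\mathbbm 1_\sA(\eta)=\tilde h(\omega)$ almost surely, with $\tilde h$ a $\Gamma$-invariant $\{0,1\}$-valued function because $\sA$ is invariant and the thinning is equivariant. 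Ergodicity of $\mu$ then gives $\tilde h$ constant, so that $\mu^q(\sA)=\E[\tilde h(\omega)]\in\{0,1\}$.

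The product case should follow from exactly the same argument, now taking $\omega=(\omega_1,\dots,\omega_k)$ with law $\nu_1\otimes\cdots\otimes\nu_k$, which is ergodic by hypothesis, and coins indexed by the clusters appearing in each of the $k$ coordinates; the hypotheses $\nu_i(\sF)=0$ supply the zero-frequency input needed in the single-coin-flip claim for clusters in each coordinate. The main obstacle I anticipate is making the single-coin-flip claim fully rigorous: both the passage from $\freq(C)=0$ to vanishing occupation of the fattened cluster $C^{+R}$, and the clean identification of the $\Gamma$-invariant $\sigma$-field with the limits of random-walk empirical averages, will require some care, and these are exactly the points where the hypothesis $\mu(\sF)=0$ and the tools of \cite{LS99} enter.
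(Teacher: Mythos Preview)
Your approach is correct but takes a genuinely different route from the paper's. Both arguments establish the intermediate claim $\P^q(\sA\mid\omega)\in\{0,1\}$ and then invoke ergodicity of $\mu$, but the paper gets there by a direct two-point decorrelation argument: it deduces from $\mu(\sF)=0$ that $\inf_x \mu\bigl(B(o,r)\llra B(x,r)\bigr)=0$ for every $r$, approximates $\sA$ by a cylinder event supported on $B(o,r)$, picks $x$ so that $B(o,r)$ and $B(x,r)$ are rarely connected in $\omega$, and uses the resulting conditional independence of the thinning coins near $o$ and near $x$ to conclude. This is more elementary---no random walk ergodic theorem is invoked, and only the \emph{in-expectation} occupation bound (your comparison estimate, taken under $\P_o$) is needed. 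Your single-coin-flip route via Kolmogorov's $0$--$1$ law is conceptually appealing but leans on Kakutani's random ergodic theorem to identify the invariant $\sigma$-field with limits of random-walk averages; moreover, your ``each visit to $C^{+R}$ is followed within $R$ steps by a visit to $C$ with probability at least $(\deg G)^{-R}$'' observation yields directly only an $L^1$/in-probability bound on $\frac1N\#\{n<N:X_n\in C^{+R}\}$, not an a.s.\ one---though in-probability suffices once a.s.\ convergence of the two empirical averages is supplied by the ergodic theorem. You should also note that transferring the $\mu^q$-a.s.\ identity $\mathbbm{1}_\sA=g(\bar f_1,\bar f_2,\ldots)$ to $\eta\triangle C$ requires the law of $\eta\triangle C$ to be absolutely continuous with respect to $\mu^q$, which holds for $q\in(0,1)$ by the obvious coin-flip Radon--Nikodym computation (the cases $q\in\{0,1\}$ being trivial).
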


\begin{proof}
Let $\omega$ be a random variable with law $\mu \in M(\Gamma,\Omega)$. We first show that if $\mu(\sF)=0$ then 
\begin{equation}\label{e.twoballs}
\lim_{N\to\infty} \frac{1}{N} \sum_{n=0}^{N-1} \P_o\Big( B(X_0,r) \llra B(X_n,r) \Big) = 0\qquad \mu\text{\,-\,a.s.}, 
\end{equation}
for every $r\geq 0$, where $B(v,r)$ is the ball of radius $r$ around $v\in V$, and for $U_1,U_2\subseteq V$, we write $\{U_1 \llra U_2\}$ for the event that there exist $x_1\in U_1$ and $x_2\in U_2$ that are in the same cluster of $\omega$. An easy but important implication of~(\ref{e.twoballs}) is that 
\begin{equation}
\label{eq:twoballs2}
\inf_{x\in V} \mu\left( B(o,r) \llra B(x,r) \right) = 0
\end{equation}
for every $r \geq 0$ and every $\mu \in M(\Gamma,\Omega)$ such that $\mu(\sF)=0$. (Note that the proof of \cite[Lemma 6.4]{LS99} established this fact under the additional assumption that $\mu$ is \emph{insertion tolerant}.)

Condition on $\omega$, and denote the finitely many clusters that intersect $B(o,r)$ by $\{C_i\}_{i=1}^m$. Taking $\P_o$-expectations in~(\ref{e.freq}) and using the dominated convergence theorem, \cref{l.freq} implies that 
\begin{equation}\label{e.oneball}
\lim_{N\to\infty} \frac{1}{N} \sum_{n=0}^{N-1} \P_o\big( B(X_0,r) \llra X_n \big) = \lim_{N\to\infty} \frac{1}{N} \sum_{i=1}^m \sum_{n=0}^{N-1} \P_o\big( X_n \in C_i \big) = 
 0 \qquad \text{$\mu$-a.s.}
\end{equation}
Now notice that  \[\sum_{i=0}^r \P_o\Big( B(X_0,r) \llra X_{n+i} \Bmid B(X_0,r) \llra B(X_n,r) \Big) \geq \deg(o)^{-r}\] 
for every $n,r\geq 0$,  and hence that
\begin{equation}\label{e.degball}
\sum_{n=0}^{N-1} \P_o\big( B(X_0,r) \llra B(X_n,r) \big) \leq (r+1)\deg(o)^{r}\sum_{n=0}^{N-1+r} \P_o\big( B(X_0,r) \llra X_n \big)
\end{equation}
for every $N\geq 1$ and $r\geq 0$.
Dividing by $N$ and letting $N\to\infty$, this inequality and~(\ref{e.oneball}) imply~(\ref{e.twoballs}).

The rest of the proof of the ergodicity of $\mu^q$ is identical to the argument in \cite[Lemma 6.4]{LS99}, which we recall here for the reader's convenience. Suppose that $\mu$ is ergodic. Denote by $\omega^q$ the $q$-thinned configuration obtained from $\omega$, let $\P^q$ denote the joint law of $(\omega,\omega^q)$, and let $A$ be any invariant event for $(\omega,\omega^q)$. For every $\eps>0$ there exists some $r>0$ and an event $A_{\eps,r}$ depending only on the restriction of $(\omega,\omega^q)$ to $B(o,r)$ such that $\P^q\big(A \,\triangle\, A_{\eps,r}\big) < \eps$. By \eqref{eq:twoballs2} we may take $x$ such that  $\mu\big( B(o,r) \llra B(x,r) \big)<\eps$. Conditionally on $D_x:=\{B(o,r) \,\,\, \not\!\!\!\llra B(x,r)\}$ in $\omega$, the coin flips for the $q$-thinning of the clusters intersecting $B(o,r)$ and $B(x,r)$ are independent, hence
\begin{align*}
\Big| \P^q \big(A_{\eps,r} \cap \gamma_x A_{\eps,r} \bmid \omega \big) - \P^q \big(A_{\eps,r} \bmid \omega \big) \, \P^q \big(\gamma_x A_{\eps,r} \bmid \omega \big) \Big| \leq 2 \cdot \mathbf 1_{D_x}(\omega)\,,
\end{align*}
where $\gamma_x$ is translation by $x \in \Gamma$. Taking expectation with respect to $\mu$ then letting $\eps\to 0$, we get that 
$$
\E_\mu\Big| \P^q(A\mid \omega) -  \P^q(A\mid \omega)^2 \Big| = 0\,,
$$
and hence that $ \P^q (A\mid \omega) \in \{0,1\}$ $\mu$-almost surely. By the ergodicity of $\mu$, this implies that $\P^q(A) \in \{0,1\}$. It follows that $\P^q$ is ergodic and hence that $\mu^q$ is ergodic also. 

Similarly, if  $\nu_1\otimes\dots\otimes\nu_k$ is ergodic and $\nu_i(\sF)=0$ for every $1\leq i \leq k$, then we have by \eqref{e.twoballs} that if $\mathbf{\omega}=(\omega_1,\ldots,\omega_k)$ is a random variable with law  $\mathbf{\nu}=\nu_1\otimes\dots\otimes\nu_k$ then
\begin{multline*}
\inf_{x\in V} \nu\left( B(o,r) \leftrightarrow B(x,r) \text{ in $\omega_i$ for some $1\leq i \leq k$} \right) \\\leq \lim_{N\to\infty}  \frac{1}{N} \sum_{n=0}^{N-1} \sum_{i=1}^k \nu_i \otimes \P_o\Big( B(X_0,r) \llra B(X_n,r) \Big) =0.
\end{multline*}
 The ergodicity of $\nu_1^q\otimes\dots\otimes\nu_k^q$ then follows by a similar argument to that above.
\end{proof}

Define $i_{\mathrm{freq}}$ to be the minimal $i\geq 1$ such that $\mu_i(\sF)>0$, letting $i_{\mathrm{freq}}=\infty$ if this never occurs. We want to prove, using induction and \cref{p.qthin}, that $\mu_i$ is ergodic for every $1\leq i\leq i_{\mathrm{freq}}$. However, it is not always true that the union of two independent ergodic percolation processes is ergodic\footnote{Consider, for example, the random subset $\omega$ of $\Z$ in which $\omega(n)=\mathbbm{1}(n \text{ is odd})$ for every $n\in \Z$ with probability $1/2$ and otherwise  $\omega(n)=\mathbbm{1}(n \text{ is even})$ for every $n\in \Z$. The law of this process is shift invariant and ergodic, but the law of the union of two independent copies of the process is not ergodic.}. To circumvent this problem, we instead prove a slightly stronger statement. Recall that a measure $\mu\in M(\Gamma,\Omega)$ is \textbf{weakly mixing} 
if and only if 
the independent product $\mu \otimes \mu \in M(\Gamma,\Omega^2)$ is ergodic when $\Gamma$ acts diagonally on $\Omega^2$, if and only if 
the $k$-wise independent product $\mu^{\otimes k} \in M(\Gamma,\Omega^k)$ is ergodic for every $k\ge 2$ \cite[Theorem 1.24]{Walters}. This can be taken as the definition of weak mixing for the purposes of this paper. 

\begin{prop}
\label{prop:mixing}
Let $G$ be a Cayley graph of an infinite, finitely generated group $\Gamma$, let $p\in (0,1)$, and let $(\mu_i)_{i\geq 1}$ be as above. Then $\mu_i$ is weakly mixing for every $1\leq i\leq i_{\mathrm{freq}}$.
\end{prop}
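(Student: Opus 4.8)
The plan is to prove the statement by induction on $i$, establishing weak mixing of $\mu_i$ for all $1\le i\le i_{\mathrm{freq}}$; recall that weak mixing of $\mu_i$ is equivalent to ergodicity of $\mu_i^{\otimes k}$ for every $k\ge 2$. For the base case $i=1$, the measure $\mu_1$ is Bernoulli-$p$ site percolation, an i.i.d.\ process, and for each $k$ the product $\mu_1^{\otimes k}$ is again an i.i.d.\ process on $\Gamma$ (with a larger single-site alphabet); since Bernoulli shifts over an infinite group are ergodic, indeed mixing, $\mu_1$ is weakly mixing.

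For the inductive step, suppose $\mu_i$ is weakly mixing and $i+1\le i_{\mathrm{freq}}$, i.e.\ $i<i_{\mathrm{freq}}$. By the minimality in the definition of $i_{\mathrm{freq}}$ we then have $\mu_i(\sF)=0$, and weak mixing gives in particular that $\mu_i\in E(\Gamma,\Omega)$. Fix $k\ge 2$; I want to show $\mu_{i+1}^{\otimes k}$ is ergodic. The key observation is that $\mu_{i+1}^{\otimes k}$ is a factor of the $2k$-fold product $(\mu_i^q)^{\otimes 2k}$: if $\eta_{1,1},\eta_{2,1},\dots,\eta_{1,k},\eta_{2,k}$ are independent with common law $\mu_i^q$, then, directly from the recursive definition of $\mu_{i+1}$, the $\Gamma$-equivariant Borel map $(\eta_{1,j},\eta_{2,j})_{j=1}^{k}\mapsto(\eta_{1,j}\cup\eta_{2,j})_{j=1}^{k}$ pushes $(\mu_i^q)^{\otimes 2k}$ forward to $\mu_{i+1}^{\otimes k}$. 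Since the image of an ergodic measure under a $\Gamma$-equivariant map is ergodic, it suffices to prove that $(\mu_i^q)^{\otimes 2k}$ is ergodic.

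This last point is exactly the product version of \cref{p.qthin}: applying it to the $2k$ copies $\nu_1=\dots=\nu_{2k}=\mu_i$, we have $\mu_i(\sF)=0$ for each copy, and $\mu_i^{\otimes 2k}$ is ergodic by the weak mixing hypothesis, so \cref{p.qthin} yields that $(\mu_i^q)^{\otimes 2k}$ is ergodic. Hence $\mu_{i+1}^{\otimes k}$ is ergodic for every $k$, which is precisely weak mixing of $\mu_{i+1}$, completing the induction.

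I expect the main point to be conceptual rather than computational, namely the choice of weak mixing as the inductive hypothesis in place of mere ergodicity. As the footnote preceding the statement illustrates, a union of two independent ergodic processes need not be ergodic, so one cannot run the induction at the level of ergodicity directly; weak mixing is exactly the strengthening that remains stable under the two operations building $\mu_{i+1}$ from $\mu_i$, namely passing to independent products and then taking unions. Concretely, it is weak mixing of $\mu_i$ (ergodicity of the full product $\mu_i^{\otimes 2k}$) that supplies the hypothesis of \cref{p.qthin}, and it is the factor-map reduction that converts the conclusion back into ergodicity of $\mu_{i+1}^{\otimes k}$. The only mild care needed is to track the correct number $2k$ of independent copies and to note that the frequency condition $\mu_i(\sF)=0$ is indeed available for every $i<i_{\mathrm{freq}}$.
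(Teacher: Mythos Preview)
Your proof is correct and follows essentially the same approach as the paper: induction on $i$, with the inductive step reducing weak mixing of $\mu_{i+1}$ to ergodicity of a product of thinned copies of $\mu_i$ via \cref{p.qthin} and the factor map given by taking unions. The only cosmetic difference is that the paper works with $k=2$ throughout (using that ergodicity of $\mu^{\otimes 2}$ already characterizes weak mixing), whereas you verify ergodicity of $\mu_{i+1}^{\otimes k}$ for all $k$ directly.
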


\begin{proof} 
We will prove the claim by induction on $i$. 
For $i=1$, $\mu_1$ is simply the law of Bernoulli-$p$ percolation, which is certainly weakly mixing. Now assume that $i<i_{\mathrm{freq}}$ and that $\mu_i$ is weakly mixing, so that $\mu_i^{\otimes 4}$ is ergodic. 
Applying \cref{p.qthin} we obtain that the
independent 4-wise product $(\mu^q_i)^{\otimes 4}$ of the $q$-thinned percolations is again ergodic. Since $\mu_{i+1}^{\otimes 2}$ can be realized as a factor of $(\mu_i^q)^{\otimes 4}$ by taking the unions in the first and second halves of the 4 coordinates, and since factors of ergodic processes are ergodic, it follows that $\mu_{i+1}^{\otimes 2}$ is ergodic and hence that $\mu_{i+1}$ is weakly mixing.
\end{proof}

Since $\sF$ is an invariant event, \cref{prop:mixing} has the following immediate corollary.

\begin{corollary}
\label{cor:connectivity_at_iu}
Let $G$ be a Cayley graph of an infinite, finitely generated group $\Gamma$, let $p\in (0,1)$, and let $(\mu_i)_{i\geq 1}$ be as above. If $i_{\mathrm{freq}}<\infty$ then
$\mu_{i_{\mathrm{freq}}}(\sF)=1$. 
\end{corollary}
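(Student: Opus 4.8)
The plan is to deduce the corollary directly from the weak mixing established in \cref{prop:mixing}, using the defining property of $i_{\mathrm{freq}}$ to rule out the degenerate value. The conceptual point is that $\sF$ is a $\Gamma$-invariant Borel event, so under any ergodic measure it has probability $0$ or $1$; all the real work has already been front-loaded into \cref{prop:mixing}, and the role of $i_{\mathrm{freq}}$ is simply to exclude the value $0$.

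First I would record that, by the very definition of $i_{\mathrm{freq}}$ as the \emph{minimal} $i\geq 1$ with $\mu_i(\sF)>0$, the standing assumption $i_{\mathrm{freq}}<\infty$ means that this minimum is attained, so that $\mu_{i_{\mathrm{freq}}}(\sF)>0$. Next I would invoke \cref{prop:mixing} at the index $i=i_{\mathrm{freq}}$, which is legitimate because that proposition asserts weak mixing for the \emph{full, inclusive} range $1\leq i\leq i_{\mathrm{freq}}$; this yields that $\mu_{i_{\mathrm{freq}}}$ is weakly mixing and, in particular, ergodic. Finally, since $\sF$ is $\Gamma$-invariant and Borel (as observed immediately before \cref{p.qthin}, because $\freq$ is $\Gamma$-invariant and Borel measurable), ergodicity of $\mu_{i_{\mathrm{freq}}}$ forces $\mu_{i_{\mathrm{freq}}}(\sF)\in\{0,1\}$. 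Combining this with the strict positivity from the first step gives $\mu_{i_{\mathrm{freq}}}(\sF)=1$, as claimed.

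The main (and essentially only) obstacle is bookkeeping rather than mathematics: one must apply the weak mixing statement \emph{exactly} at the boundary index $i=i_{\mathrm{freq}}$, not merely for $i$ strictly below it. Had \cref{prop:mixing} been proved only in the range $i<i_{\mathrm{freq}}$, the argument would collapse, since at the critical index we would have no ergodicity to convert positive probability into full probability. Because the proposition is stated inclusively up to $i_{\mathrm{freq}}$, no further effort is needed and the corollary is immediate.
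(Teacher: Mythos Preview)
Your proof is correct and follows exactly the paper's approach: the paper states that the corollary is immediate from \cref{prop:mixing} together with the $\Gamma$-invariance of $\sF$, and your write-up simply spells out this immediate deduction (ergodicity forces $\mu_{i_{\mathrm{freq}}}(\sF)\in\{0,1\}$, and the definition of $i_{\mathrm{freq}}$ rules out $0$). Your observation about the importance of the \emph{inclusive} range $1\le i\le i_{\mathrm{freq}}$ in \cref{prop:mixing} is well taken.
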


\begin{remark}\label{r.insuni}
It is possible to prove by induction that the measures $\mu_i$ are both \emph{insertion tolerant} and \emph{deletion tolerant} for every $i\geq 1$. Thus, it follows from the \emph{indistinguishability theorem} of Lyons and Schramm \cite{LS99}, which holds for all insertion tolerant invariant percolation processes, that if $i_{\mathrm{freq}} < \infty$ then $\mu_{i_{\mathrm{freq}}}$ is supported on configurations in which there is a unique infinite cluster; see \cite[Section 4]{LS99}. We will not require this result.
\end{remark}

Next, we deduce the following from \cref{prop:mixing}.

\begin{corollary}[Condensation]
\label{prop:termination}
Let $G$ be a Cayley graph of a countably infinite Kazhdan group, let $p\in (0,1)$ and let $(\mu_i)_{i\geq 1}$ be as above. Then $i_{\mathrm{freq}}<\infty$.
\end{corollary}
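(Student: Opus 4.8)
The plan is to argue by contradiction, with the Glasner--Weiss characterization of property (T) from \cref{t.GW} as the essential ingredient. Suppose that $i_{\mathrm{freq}}=\infty$, so that $\mu_i(\sF)=0$ for every $i\geq 1$. Then the hypothesis of \cref{prop:mixing} holds at every index, and we conclude that $\mu_i$ is weakly mixing — in particular ergodic — for all $i\geq 1$. Thus the entire sequence $(\mu_i)_{i\geq 1}$ lies in $E(\Gamma,\Omega)$.

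Next I would invoke \cref{cor:degeneration}, which asserts that $\mu_i$ converges weak$^*$ to the measure $\mu_\infty := p\,\delta_V+(1-p)\,\delta_\emptyset$. This limit is manifestly \emph{not} ergodic: the all-ones configuration $\mathbf 1_V$ is a fixed point of the $\Gamma$-action, so the singleton $\{\mathbf 1_V\}$ is an invariant Borel event, and $\mu_\infty(\{\mathbf 1_V\})=p\in(0,1)$. Hence $\mu_\infty\notin E(\Gamma,\Omega)$. The contradiction now comes from property (T): since $\Gamma$ is a countably infinite Kazhdan group, \cref{t.GW} guarantees that $E(\Gamma,\Omega)$ is \emph{closed} in $M(\Gamma,\Omega)$ in the weak$^*$ topology. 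But we have exhibited a sequence in $E(\Gamma,\Omega)$ converging weak$^*$ to a point outside $E(\Gamma,\Omega)$, contradicting closedness. Therefore $i_{\mathrm{freq}}<\infty$.

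The substance of the argument lies entirely in recognizing where property (T) enters. For a general group the weak$^*$ limit of ergodic measures may fail to be ergodic, so the degeneration supplied by \cref{cor:degeneration} would carry no information whatsoever; it is precisely the closedness of $E(\Gamma,\Omega)$ — equivalent to property (T) by Glasner--Weiss — that forbids a sequence of ergodic measures from condensing onto the non-ergodic mixture $p\,\delta_V+(1-p)\,\delta_\emptyset$. The one point requiring care is that \cref{prop:mixing} delivers ergodicity of $\mu_i$ only in the range $1\leq i\leq i_{\mathrm{freq}}$; under the contradiction hypothesis $i_{\mathrm{freq}}=\infty$ this range becomes all of $\N$, which is exactly what the closedness argument consumes. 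I do not anticipate any genuine analytic obstacle here, since no computation beyond the trivial non-ergodicity of $\mu_\infty$ is needed.
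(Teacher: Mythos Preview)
Your proof is correct and follows exactly the same approach as the paper's: assume $i_{\mathrm{freq}}=\infty$, use \cref{prop:mixing} to get that every $\mu_i$ is ergodic, then invoke \cref{cor:degeneration} together with the Glasner--Weiss closedness of $E(\Gamma,\Omega)$ to derive a contradiction from the non-ergodicity of $p\delta_V+(1-p)\delta_\emptyset$. The paper's proof is just a terser version of what you wrote.
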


\begin{proof}
Suppose for contradiction that $i_{\mathrm{freq}}=\infty$. Then it follows by \cref{prop:mixing} that $\mu_i$ is weakly mixing and hence ergodic for every $i\geq 1$. But $\mu_i$ weak$^*$ converges to the non-ergodic measure $p\delta_V +(1-p)\delta_\emptyset$ by \cref{cor:degeneration}, contradicting property (T). 
\end{proof}

\begin{proof}[Proof of \cref{thm:main}]
Recall that every countable Kazhdan group is finitely generated \cite[Theorem 1.3.1]{MR2415834}. 
Let $G=(V,E)$ be a Cayley graph of $\Gamma$, let $p\in (0,1)$, and let $(\mu_i)_{i\geq 1}$ be as above. It follows from \cref{prop:termination,cor:connectivity_at_iu} that $1\leq i_{\mathrm{freq}}<\infty$ and that $\mu_{i_{\mathrm{freq}}}$ is supported on $\sF$. Let $\omega \in \{0,1\}^V$ be sampled from $\mu_{i_{\mathrm{freq}}}$, so that $\omega \in \sF$ almost surely. 
Fatou's lemma implies that the total frequency of all components of $\omega$ is at most $1$ almost surely, and consequently that $\omega$ has at most finitely many components of maximal frequency almost surely. 
Let $\omega'$ be obtained from $\omega$ by choosing one of the maximum-frequency components of $\omega$ uniformly at random, retaining this component, and deleting all other components of $\omega$, so that  $\omega'$ has a unique infinite cluster almost surely. Let $\eta\in \{0,1\}^{\Gamma \times \Gamma}$ be defined by setting $\eta(u,v)=1$ if and only if $u$ and $v$ are adjacent in $G$ and have $\omega'(u)=\omega'(v)=1$, and let $\nu$ be the law of $\eta$, so that $\nu \in M(\Gamma,\conn(\Gamma))$. It follows by \cref{prop:sparse_cost,cor:degeneration} that
\[
\cost(\Gamma) \leq 1 + \frac{1}{2}\int_{\conn(\Gamma)} \deg_\eta(o) \dif \nu(\eta)
\leq  
1 + \frac{\deg(o)}{2}\int_{\Omega} \omega(o) \dif \mu_{i_{\mathrm{freq}}}(\omega)
= 1 + \frac{p \deg(o)}{2}.
\]
The claim now follows since $p\in(0,1)$ was arbitrary.
\end{proof}

\section{Relative property (T)}\label{s.rel}

In this section we sketch an extension of our results to groups with \emph{relative property $(T)$}, a notion that was considered implicitly in the original work of Kazhdan \cite{Kazhdan} and first studied explicitly by Margulis \cite{MR939574}.
If $H$ is a subgroup of $\Gamma$, then the pair $(\Gamma,H)$ is said to have {\bf relative property (T)} if every unitary representation of $\Gamma$ on a Hilbert space that has almost-invariant vectors has a non-zero $H$-invariant vector; see \cite[Definition 1.4.3]{MR2415834}. For example, $(\Z^2 \rtimes \mathrm{SL}_2(\Z),\Z^2)$ has relative property (T) but $\Z^2 \rtimes \mathrm{SL}_2(\Z)$ does not have property (T) itself \cite{Kazhdan}. Similar results with $\Z$ replaced by other rings have been proven by Kassabov \cite{MR2342638} and Shalom \cite{MR1813225}. 
See e.g.\ \cite{jaudon2007notes,de2006relative} for further background.

The analogue of the Glasner-Weiss theorem for pairs $(\Gamma,H)$ with relative property $(T)$ is that any  weak$^*$-limit of $\Gamma$-invariant $H$-ergodic probability measures on $\Omega=\{0,1\}^\Gamma$ is $\Gamma$-ergodic; this can be established using the same methods as those of \cite{MR1475550}. 
Using this, our proof of \cref{thm:main} can be extended to the following situation:

\begin{thm}
\label{thm:rel}
Let $H$ be an infinite \emph{normal} subgroup of a countable group $\Gamma$, and assume that the pair $(\Gamma,H)$ has {relative property (T)}.
Then $\Gamma$ has cost $1$.
\end{thm}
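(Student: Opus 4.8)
The plan is to re-run the construction and analysis of \cref{ss.constru} essentially verbatim, but to replace $\Gamma$-ergodicity by $H$-ergodicity at every step and to invoke the relative Glasner--Weiss theorem stated above in place of \cref{t.GW} at the very end. Since the groups of primary interest (such as $\Z^2\rtimes\mathrm{SL}_2(\Z)$, for which $H=\Z^2$ has infinite index) are finitely generated, I would treat the finitely generated case first, fixing a Cayley graph $G=(V,E)$ of $\Gamma$ with respect to a finite symmetric generating set and building the iterated thin-and-union measures $(\mu_i)_{i\ge 1}$ on $\{0,1\}^V$ exactly as before; the non-finitely-generated case can be reduced to this one using general properties of cost, such as the direct-product bound \cite[Theorem 33.3]{MR2095154}, and I do not detail it. By \cref{cor:degeneration} the measures $\mu_i$ still converge weak$^*$ to the non-ergodic measure $p\delta_V+(1-p)\delta_\emptyset$, so once again it suffices to establish a condensation statement.

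The essential modification is in the notion of frequency. The walk underlying \cref{l.freq} explores all of $\Gamma$, so the event $\sF$ it defines is adapted to $\Gamma$ rather than to $H$; when $[\Gamma:H]=\infty$ and $\Gamma/H$ is nonamenable (as for $\Z^2\rtimes\mathrm{SL}_2(\Z)$) the projected walk on $\Gamma/H$ is transient, the walk visits the coset $H$ with asymptotic density zero, and so vanishing of the $\Gamma$-frequency gives no control along $H$-translates. I would therefore redefine the frequency using a random walk $\{Z_n\}$ driven by a symmetric probability measure on $H$ whose support generates $H$ (taking simple random walk on a Cayley graph of $H$ when $H$ is finitely generated), and re-prove the Lyons--Schramm cluster-frequency lemma for this walk to obtain an $H$-invariant function $\freq_H$ and the corresponding $H$-invariant event $\sF_H$ that there exists a cluster of positive $H$-frequency. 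Because $Z_n\in H$ for all $n$, vanishing of $\freq_H$ now yields the $H$-adapted decorrelation estimate $\inf_{x\in H}\mu(B(o,r)\llra B(x,r))=0$, which is exactly what is needed to push through the argument of \cref{p.qthin} with $H$-invariant events $A$ and translations $\gamma_x$ by $x\in H$. This gives the $H$-analogue of \cref{p.qthin}: $\mu^q$ is $H$-ergodic whenever $\mu$ is $H$-ergodic and $\mu(\sF_H)=0$, and likewise for independent products.

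Granting this $H$-thinning lemma, the induction of \cref{prop:mixing} goes through with ``weakly mixing'' read as ``$H$-weakly mixing'', i.e.\ all products $\mu_i^{\otimes k}$ being $H$-ergodic: the base case holds because Bernoulli-$p$ percolation is $H$-mixing when $H$ is infinite, and the inductive step realizes $\mu_{i+1}^{\otimes 2}$ as a factor of an $H$-ergodic four-fold thinned product. Writing $i_{\mathrm{freq}}^H$ for the least $i$ with $\mu_i(\sF_H)>0$, this shows $\mu_i$ is $H$-weakly mixing, hence $H$-ergodic, for all $1\le i\le i_{\mathrm{freq}}^H$. If $i_{\mathrm{freq}}^H=\infty$ then every $\mu_i$ is $\Gamma$-invariant and $H$-ergodic, so by the relative Glasner--Weiss theorem their weak$^*$ limit $p\delta_V+(1-p)\delta_\emptyset$ would be $\Gamma$-ergodic, which it is not; hence $i_{\mathrm{freq}}^H<\infty$, and as in \cref{cor:connectivity_at_iu} the $H$-ergodicity of $\mu_{i_{\mathrm{freq}}^H}$ forces $\mu_{i_{\mathrm{freq}}^H}(\sF_H)=1$. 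A cluster of positive $H$-frequency is infinite in $G$, and (the total $H$-frequency being at most $1$) there are only finitely many of maximal $H$-frequency, so retaining one such cluster chosen uniformly at random and deleting the rest yields a $\Gamma$-invariant measure on $\conn(\Gamma)$ of expected degree at most $p\deg(o)$; \cref{prop:sparse_cost} then gives $\cost(\Gamma)\le 1+\tfrac12 p\deg(o)$, and letting $p\to 0$ shows $\cost(\Gamma)=1$.

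The hard part is precisely the re-proof of the cluster-frequency lemma for the $H$-walk and the verification that $\sF_H$ is well behaved: one must confirm that an $H$-invariant frequency exists along a walk confined to a single coset of a possibly infinite-index, possibly infinitely generated subgroup, and that positivity of this frequency is preserved under the conjugations coming from normality of $H$, so that $\sF_H$ and the induction on $\mu_i(\sF_H)$ interact correctly with the $\Gamma$-invariance of the $\mu_i$. Everything else is a routine transcription of the arguments in \cref{p.qthin,prop:mixing,prop:termination} with $\Gamma$ replaced by $H$.
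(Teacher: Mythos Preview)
Your overall strategy---replace $\Gamma$-frequency by an $H$-walk frequency, push the weak-mixing induction through relative to $H$, and invoke the relative Glasner--Weiss theorem to force condensation---matches the paper's. However, two steps in your endgame do not go through as written, and the paper handles both differently.

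First, the passage from vanishing of $\freq_H$ to the decorrelation estimate $\inf_{x\in H}\mu(B(o,r)\llra B(x,r))=0$ fails for $r>0$. In the original argument, the step from \eqref{e.oneball} to \eqref{e.twoballs} uses \eqref{e.degball}, which relies on the walk being able to reach any vertex of $B(X_n,r)$ within $r$ steps. Once the walk is confined to a single $H$-coset while clusters are not, this is false: a cluster can meet $B(X_n,r)$ without meeting the coset of $X_n$ near $X_n$. The paper's fix is to introduce a whole family $\freq_{gH,r}$ of frequencies, one for each coset $gH$ and each radius $r$, defined directly as the Ces\`aro density of $\{B(X_n,r)\cap C\neq\emptyset\}$, and to let the analogue of $\sF$ be the event that some cluster has positive $\freq_{gH,r}$ for some $r$. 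With this definition the Lyons--Schramm argument for $H$-ergodicity of the thinning goes through.

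Second, and more seriously, selecting a single cluster of maximal $H$-frequency does \emph{not} yield a $\Gamma$-invariant law. Your $\freq_H$ is $H$-invariant but not $\Gamma$-invariant: translating by $\gamma\notin H$ replaces the density along $H$ by the density along the coset $\gamma^{-1}H$, and there is no reason for the cluster maximizing one to maximize the other. (Your closing paragraph flags the interaction with $\Gamma$-invariance, but treats it as a verification; in fact the single-cluster rule is genuinely not equivariant.) The paper resolves this by selecting, for \emph{each} coset $gH$, a cluster $\eta_{gH}$ maximizing $\freq_{gH,r}$---this collection is manifestly $\Gamma$-equivariant---and then adding an independent Bernoulli$(\eps)$ bond sprinkling. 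A short Fatou/translation argument, using that neighbouring cosets are joined by infinitely many $G$-edges (here normality and infiniteness of $H$ are essential), shows the sprinkling almost surely merges all the $\eta_{gH}$ into a single infinite component, producing a $\Gamma$-invariant process in $\conn(\Gamma)$ of expected degree $O(p+\eps)$.

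Finally, the non-finitely-generated reduction via the direct-product bound does not apply here, since $\Gamma$ need not be a direct product. The paper instead uses that relative property (T) forces $H$ to lie in a finitely generated subgroup $\Gamma'\le\Gamma$ with $(\Gamma',H)$ still having relative property (T), applies the finitely generated case to $\Gamma'$, and then co-induces a cheap spanning graph on each $\Gamma'$-coset, connecting the cosets by a further arbitrarily sparse $\Gamma$-invariant random graph.
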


The fact that $\Gamma$ has $\beta_1(\Gamma)=0$ under the hypotheses of \cref{thm:rel} was proven by Martin \cite{martin2003analyse}. 
The assumption that $H$ is infinite is clearly needed since every group has relative property $(T)$ with respect to its one-element subgroup. It should however be possible to relax the condition of normality in various ways, for example to \emph{$s$-normality} \cite{MR2827095} or \emph{weak quasi-normality} \cite{MR2777775}. We do not pursue this here.

It is a theorem of Gaboriau \cite[Theorem 3.4]{MR1919400} that if $\Gamma$ is a countable group with an infinite, infinite-index, normal subgroup\footnote{In \cite{MR1919400}, it is assumed that $H$ is finitely generated, but it is well-known that this can be replaced by the weaker assumption that $H$ has finite cost, by a co-induction argument similar to the one in our proof below.} $H$ with $\cost(H)<\infty$, then $\Gamma$ has cost $1$. The condition $\cost(H)<\infty$ is very weak, and applies in particular whenever $H$ is either finitely generated or amenable. Thus, most natural examples to which \cref{thm:rel} applies are already treated either by this theorem or by \cref{thm:main} (in the case $H=\Gamma$). As such, the main interest of \cref{thm:rel} is to demonstrate the flexibility of the proof of \cref{thm:main}, and we give only a brief sketch of the proof.


\begin{proof}[Sketch of proof] First assume that $\Gamma$ is finitely generated. We start with the same sequence of measures $\{\mu_i\}_{i\ge 1}$ on $\Omega$ as before, using a Cayley graph $G$ of $\Gamma$ with a finite symmetric generating set $S$, with edges given by right multiplication by the generating elements. The left cosets $gH$ then form a partition of the Cayley graph into isomorphic subgraphs. Moreover, if two cosets $g_1H$ and $g_2H$ are neighbours in the sense that $g_1n_1s=g_2n_2$ for some $n_i\in H$ and $s\in S$, then for every $n\in H$ we have that $$g_1ns=g_1n n_1^{-1}g_1^{-1}g_2n_2=n' g_2 n_2=g_2 n''$$
for some $n',n''\in H$, because $H$ is normal. Thus, neighbouring cosets are connected in $G$ by infinitely many edges (because $H$ is infinite). 

We will have to measure cluster frequencies inside individual $H$-cosets, and will therefore use a random walk whose jump distribution generates $H$. Specifically, we enumerate the elements of $H$ as $\{h_1,h_2,\ldots\}$, let $(Z_i)_{i\geq 1}$ be an i.i.d.\ sequence of $H$-valued random variables with $\P(Z_i=h_k) = 2^{-k}$, and write $\P^{X_0}$ for the law of the random walk $(X_n)_{n\geq 0}$  defined by $X_{n+1} = X_n Z_{n+1}$ for each $n\geq 0$, where $X_0$ is an arbitrary element of $\Gamma$.
 An analogue of \cref{l.freq} is that for  every $r\in\N$, and every left $H$-coset $gH$, there exists an $H$-invariant cluster frequency function $\freq_{gH,r}$ such that if $\mu \in M(\Gamma,\Omega)$ is a $\Gamma$-invariant percolation process and $\omega$ is a random variable with law $\mu$ then
$$
\lim_{N\to\infty} \frac{1}{N} \sum_{n=0}^{N-1} \mathbbm{1}_{\big\{B(X_n,r) \cap C \not= \emptyset\big\}} = \freq_{gH,r}(C) \qquad \text{ for every cluster $C$ of $\omega$}
$$
$\mu \otimes \P^{X_0}$ almost surely for every $X_0 \in g H$.
The argument of \cref{p.qthin} then implies  that, if all cluster frequencies $\freq_{gH,r}(C)$ for every $r\in\N$ are almost surely zero in an $H$-invariant $H$-ergodic percolation measure $\mu$, then $\mu^q\otimes\dots\otimes\mu^q$ is $H$-ergodic. The reason we need the zero frequencies for all $r$-balls instead of just $r=0$ is that (\ref{e.degball}) does not necessarily hold now, since the random walk is confined to the $H$-coset, while percolation clusters are not.

Now, the analogue of \cref{prop:termination} is that if $(\Gamma,H)$ has relative property (T), then there exists $i_{\mathrm{freq}}<\infty$ and $r \in \N$ such that, if $\omega \in \{0,1\}^\Gamma$ is a random variable with law $\mu_{i_\mathrm{freq}}$, then for every left $H$-coset $gH$ there almost surely exists a cluster $C_{gH}$ with $\freq_{gH,r}(C_{gH})>0$. For each coset $gH$, let $\eta_{gH}$ be a cluster chosen uniformly at random from among those maximizing $\freq_{gH,r}$. 
%
 Now we can apply {\it sprinkling}: for any $\eps>0$, adding an independent Bernoulli$(\eps)$ bond percolation will almost surely connect the infinite clusters $\eta_{gH}$ in neighbouring $H$-cosets, and by deleting all clusters of the resulting percolation configuration other than the unique cluster containing $\bigcup \eta_{gH}$, we obtain a $\Gamma$-invariant percolation process of average degree $O(p+\eps)$ that has a unique infinite cluster. The fact that this sprinkling achieves the desired effect follows by a standard argument in invariant percolation (see e.g.\ the proof of \cite[Theorem 6.12]{LS99}), sketched as follows: 
\begin{enumerate}
\item 
Let $e$ be the identity element of $\Gamma$. For each $\delta>0$ there exists $R$ such that the cluster $\eta_{H}$ intersects the ball $B(e,R)$ with probability at least $1-\delta$. Thus, for each $u \in H$ and $s\in S$ the clusters $\eta_{sH}$ and $\eta_{H}$ both intersect the ball $B(u,R+1)$ with probability at least $1-2\delta$. Thus, if $u_1,u_2\ldots$ is an enumeration of $H$ then the clusters $\eta_{sH}$ and $\eta_{H}$ both intersect the ball $B(u_i,R+1)$ for infinitely many $i$ with probability at least $1-2\delta$ by Fatou's lemma.
On this event, it is immediate that the $\eps$-sprinkling connects the clusters $\eta_{sH}$ and $\eta_{H}$ almost surely. We deduce that  the $\eps$-sprinkling connects the clusters $\eta_{sH}$ and $\eta_{H}$  with probability at least $1-2\delta$, and hence with probability $1$ since $\delta>0$ was arbitrary.
\item Any two cosets have a finite chain of neighbouring coset pairs connecting them, hence sprinkling gives a \emph{unique} infinite cluster that contains $\bigcup \eta_{gH}$. 
\end{enumerate}
Since $p$ and $\eps$ can be made arbitrarily small, \cref{prop:sparse_cost} applies, and $\Gamma$ must have cost 1.

We can now remove the assumption that $\Gamma$ be finitely generated, as pointed out to us by Damien Gaboriau. First, the standard proof that Kazhdan groups are finitely generated \cite[Theorem 1.3.1]{MR2415834} gives for relative property (T) that the subgroup $H$ is contained in a finitely generated subgroup $\Gamma'$ of $\Gamma$ such that the pair $(\Gamma',H)$ has relative property (T) 
\cite[Theorems 2.2.1 and 2.2.3]{jaudon2007notes}. 
Our above proof gives that $\Gamma'$ has cost 1.  
Thus, for any $\eps>0$, we can independently take a $\Gamma'$-invariant random graph spanning $g\Gamma'$ with expected degree at most $2+\eps$ in each left coset $g\Gamma'$ of $\Gamma$. The resulting bond percolation $\omega_\eps$ is $\Gamma$-invariant. (This is the probabilistic interpretation of lifting the $\Gamma'$-action to a $\Gamma$-action by {\it co-induction}, as defined in \cite[Section 3.4]{GabExamples} or \cite[Section 10.(G)]{MR2583950}.)  
Let $\{\gamma_i : i \geq 1\}$ be an enumeration of $\Gamma$, and consider the random subset $\eta_\eps \subseteq \Gamma \times \Gamma$ in which each $(g,g\gamma_i)$ is included independently at random with probability $\eps 2^{-i}$. Let $\bar \eta_\eps = \eta_\eps \cup \{(g_1,g_2) : (g_2,g_1) \in \eta_\eps\}$ be obtained from $\eta_\eps$ by symmetrization, so that $\bar \eta_\eps$ is a $\Gamma$-invariant random graph on $\Gamma$ with expected degree at most $2\eps$. 
   Consider the independent union of $\omega_\eps$ and $\bar \eta_\eps$, which has expected degree at most $2+3\eps$. Since $H$ is an infinite normal subgroup of $\Gamma$ and each left $H$-coset $gH$ is contained in a single connected component of $\omega_\eps$, a similar argument to above shows that $\bar \eta_\eps$ almost surely connects each pair of components of $\omega_\eps$, so that the union of $\omega_\eps$ and $\bar \eta_\eps$ is connected almost surely. Since $\eps$ was arbitrary, $\Gamma$ has cost 1.
\end{proof}

\section{Closing remarks}


\begin{remark}
It would be interesting to investigate the behaviour of the processes we construct in \cref{ss.constru} on other classes of Cayley graphs. Simulations suggest, perhaps surprisingly, that the process has very different behaviours on $\Z^2$ and $\Z^3$: It seems that in two dimensions, when $p>0$ is small, $\mu_i$ is supported on configurations with no infinite clusters for every $i\geq 1$, while in three dimensions there is a unique infinite cluster after finitely many iterations. See \cref{fig:Z2} and \cref{fig:Z3}. Understanding the reason for this disparity may lead to proofs of cost $1$ for other classes of groups.
\end{remark}

\begin{figure}
    \centering
    \begin{subfigure}[b]{0.31\textwidth}
        \setlength{\fboxrule}{0.5pt}
\setlength{\fboxsep}{0pt}
\fbox{\includegraphics[trim = {0.1cm 0.1cm 0.1cm 0.1cm}, clip, width=\textwidth]{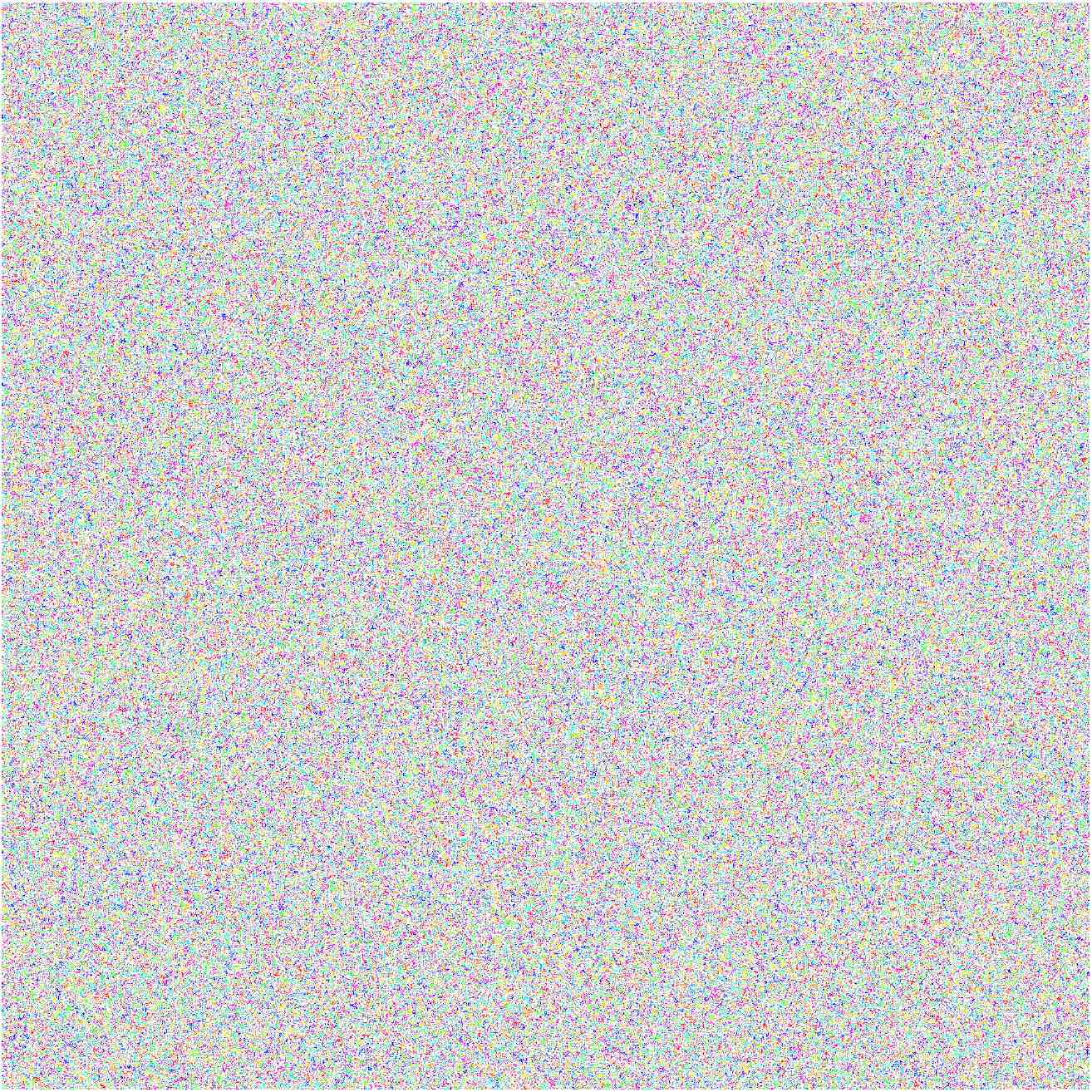}}
        \caption{Bernoulli percolation: $\mu_1$}
    \end{subfigure}
    \vspace{0.01\textwidth}
    \hspace{0.02\textwidth}
        \begin{subfigure}[b]{0.31\textwidth}
            \setlength{\fboxrule}{0.5pt}
\setlength{\fboxsep}{0pt}
\fbox{\includegraphics[trim = {0.1cm 0.1cm 0.1cm 0.1cm}, clip, width=\textwidth]{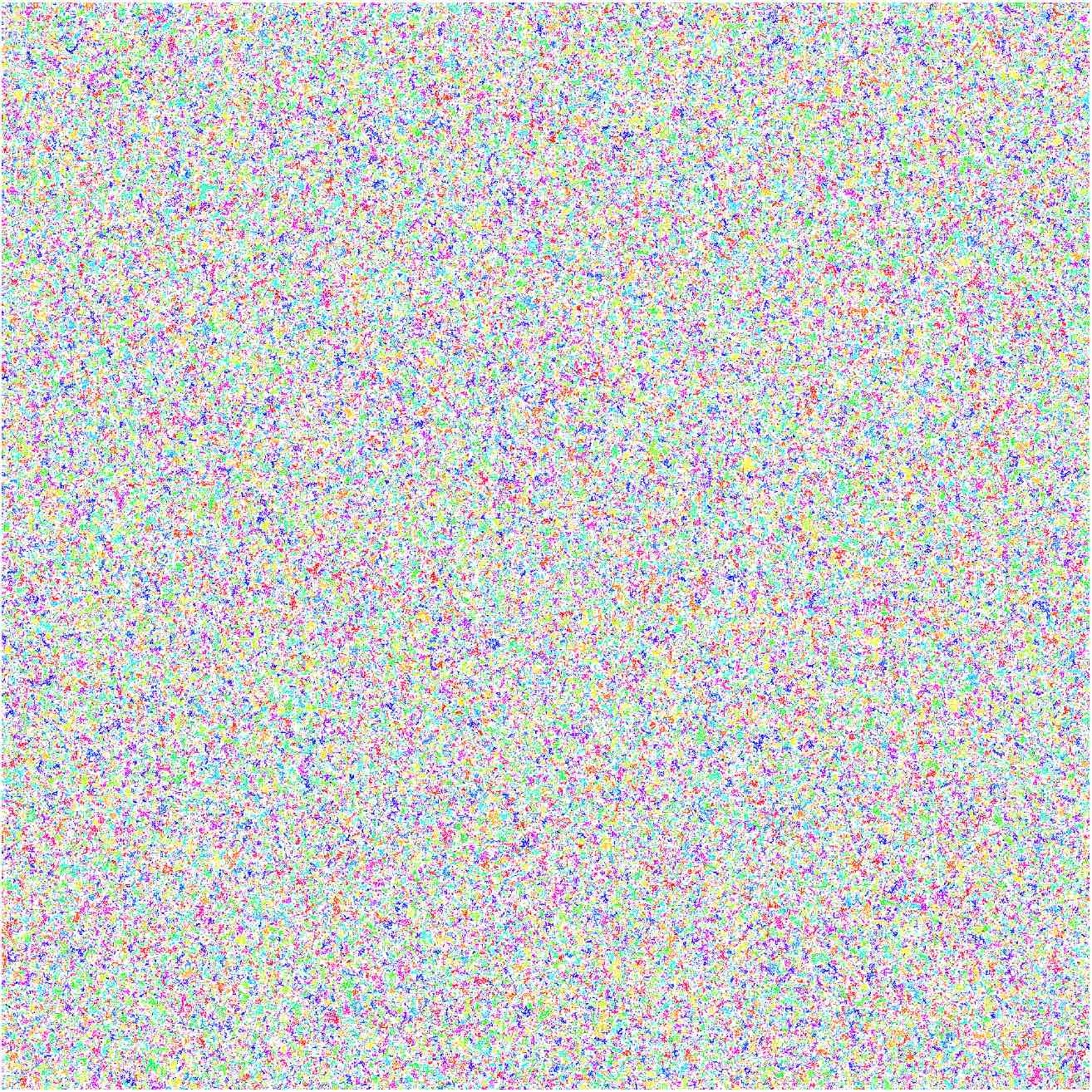}}
        \caption{One iteration: $\mu_2$}
    \end{subfigure}
\hspace{0.02\textwidth}
    \begin{subfigure}[b]{0.31\textwidth}
            \setlength{\fboxrule}{0.5pt}
\setlength{\fboxsep}{0pt}
\fbox{\includegraphics[trim = {0.1cm 0.1cm 0.1cm 0.1cm}, clip, width=\textwidth]{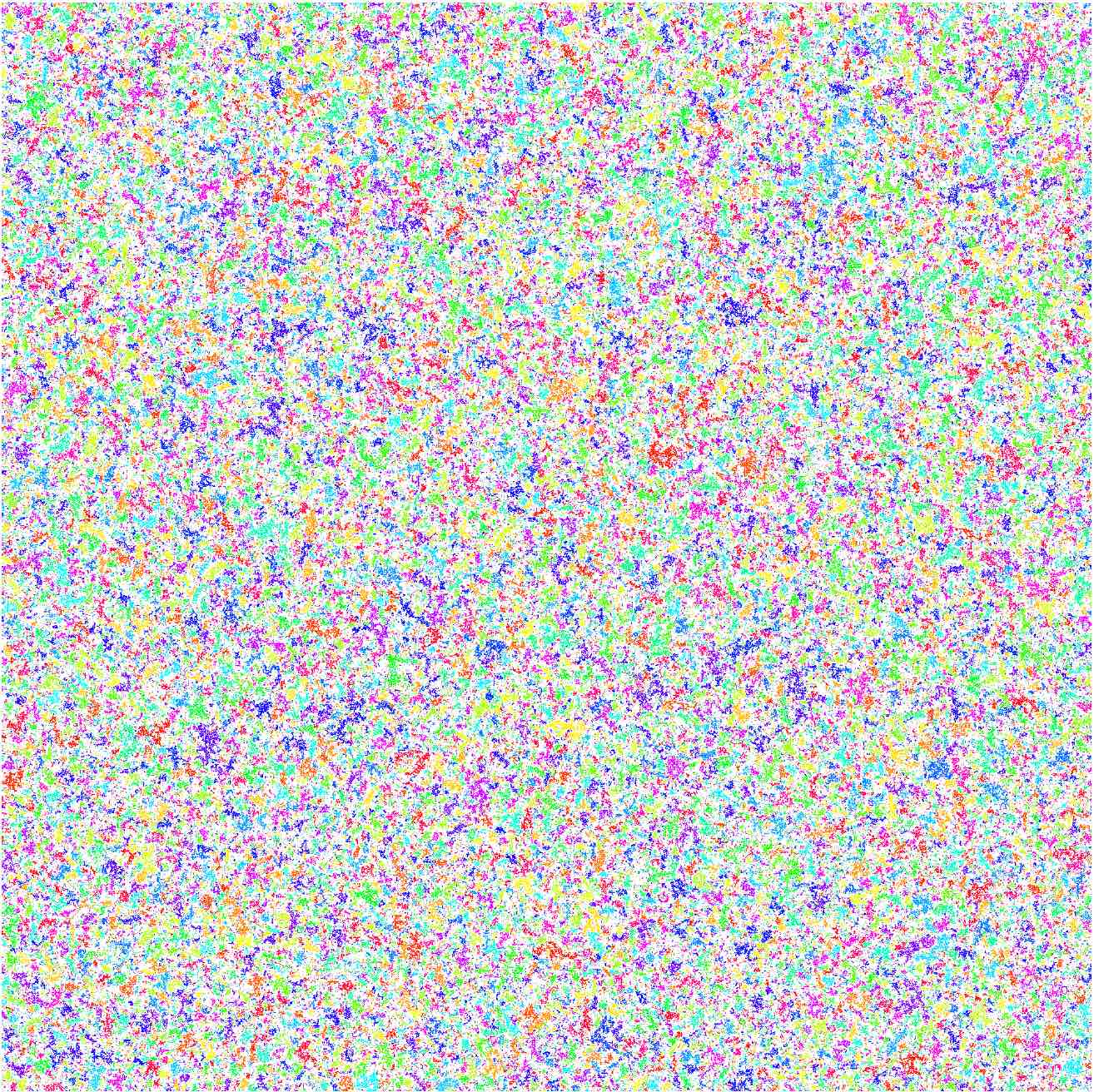}}
        \caption{Two iterations: $\mu_3$}
    \end{subfigure}
        \vspace{0.01\textwidth}
        \begin{subfigure}[b]{0.31\textwidth}
            \setlength{\fboxrule}{0.5pt}
\setlength{\fboxsep}{0pt}
\fbox{\includegraphics[trim = {0.1cm 0.1cm 0.1cm 0.1cm}, clip, width=\textwidth]{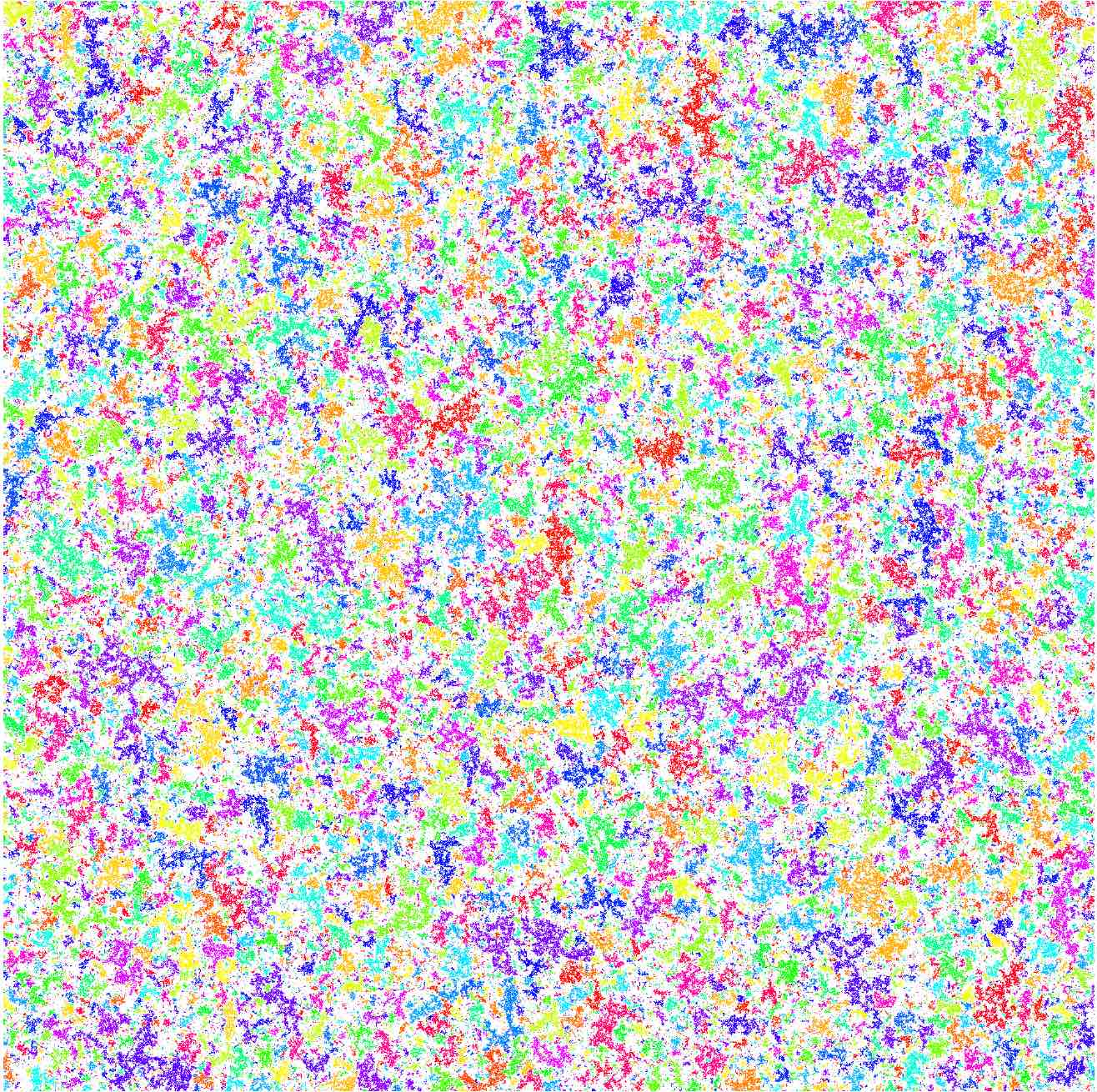}}
        \caption{Three iterations: $\mu_4$}
    \end{subfigure}
    \hspace{0.02\textwidth}
        \begin{subfigure}[b]{0.31\textwidth}
            \setlength{\fboxrule}{0.5pt}
\setlength{\fboxsep}{0pt}
\fbox{\includegraphics[trim = {0.1cm 0.1cm 0.1cm 0.1cm}, clip, width=\textwidth]{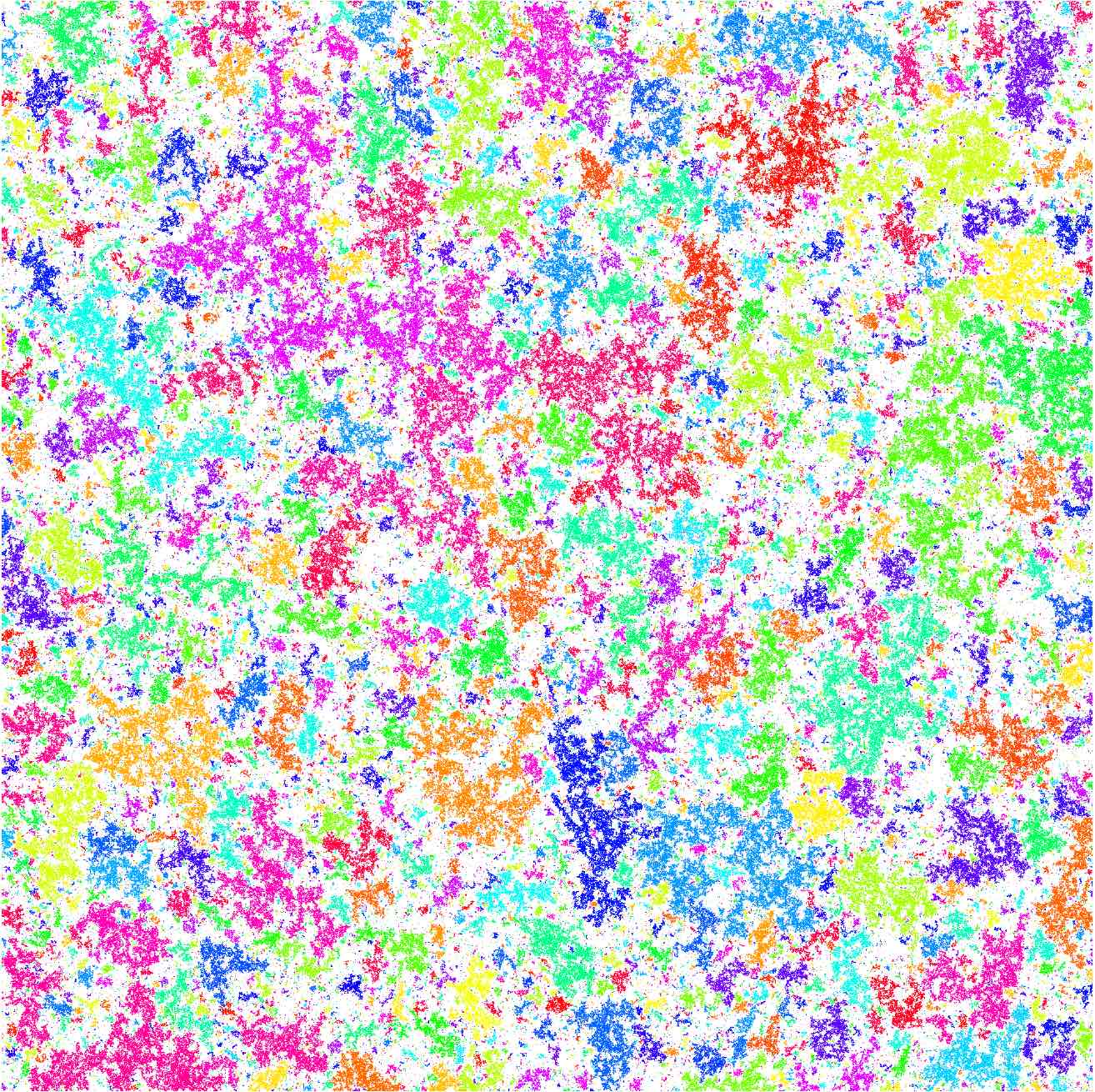}}
        \caption{Four iterations: $\mu_5$}
    \end{subfigure}
    \hspace{0.02\textwidth}
        \begin{subfigure}[b]{0.31\textwidth}
            \setlength{\fboxrule}{0.5pt}
\setlength{\fboxsep}{0pt}
\fbox{\includegraphics[trim = {0.1cm 0.1cm 0.1cm 0.1cm}, clip, width=\textwidth]{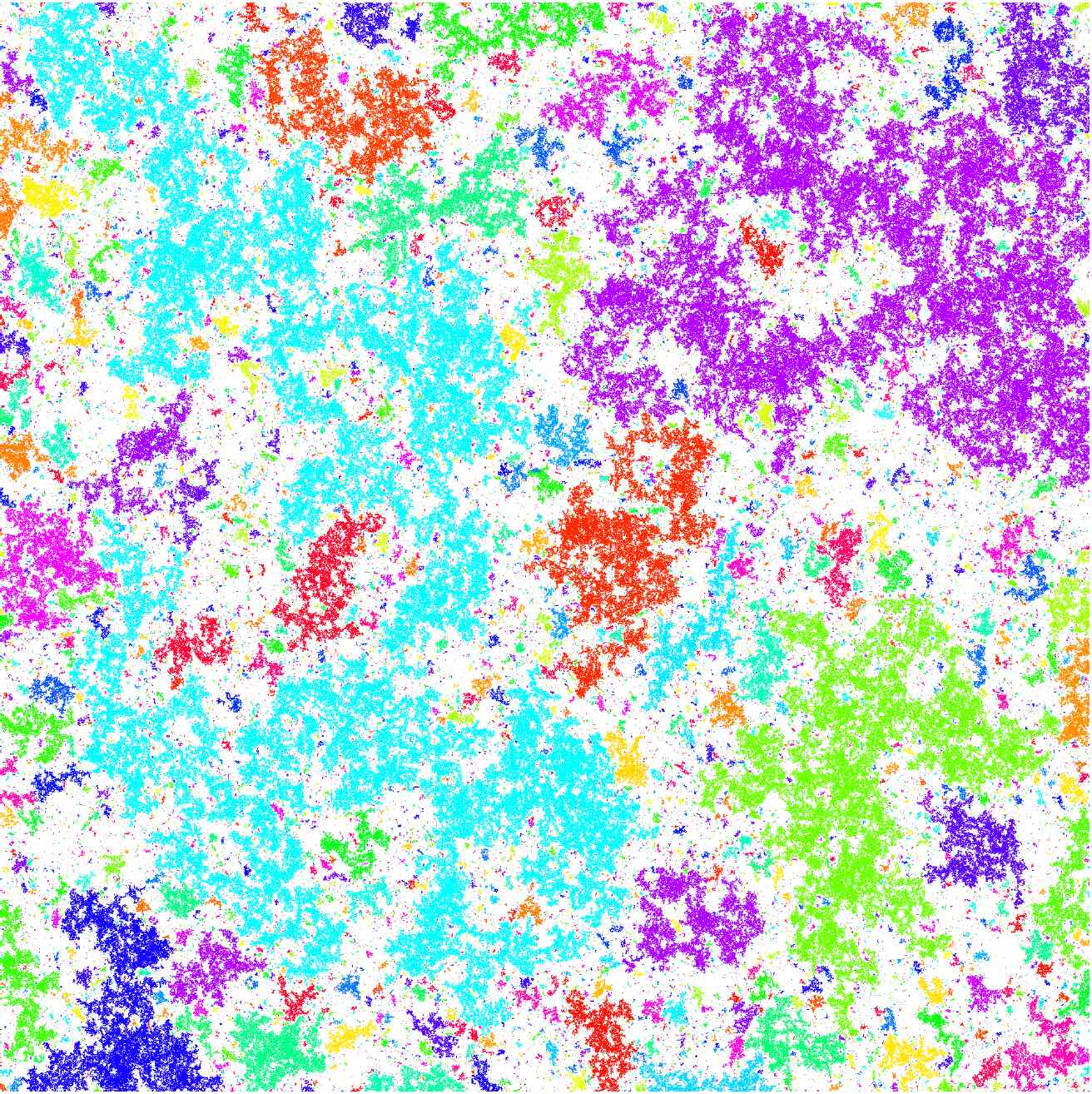}}
        \caption{Five iterations: $\mu_6$}
    \end{subfigure}
        \begin{subfigure}[b]{0.31\textwidth}
            \setlength{\fboxrule}{0.5pt}
\setlength{\fboxsep}{0pt}
\fbox{\includegraphics[trim = {0.1cm 0.1cm 0.1cm 0.1cm}, clip, width=\textwidth]{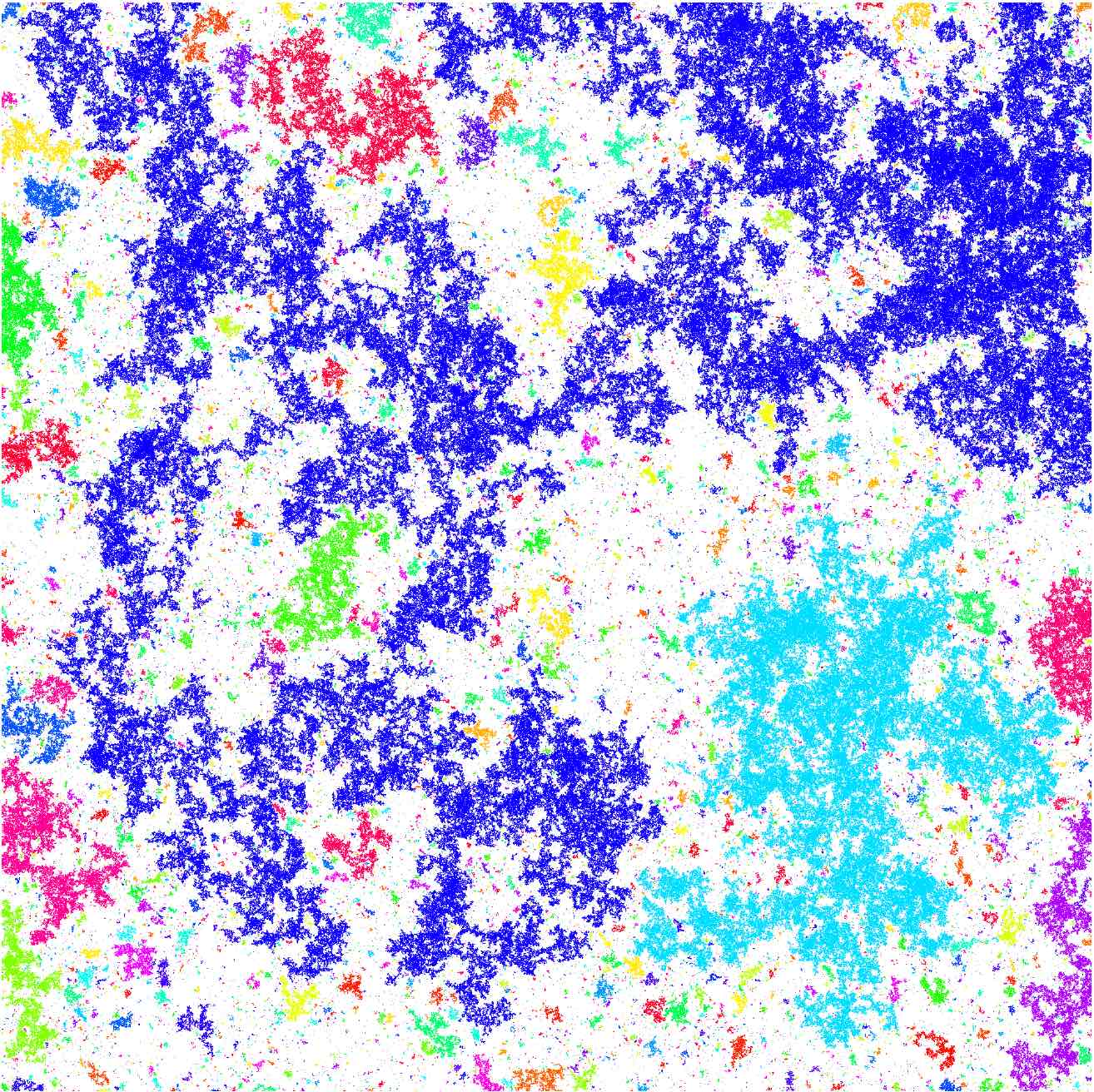}}
        \caption{Six iterations: $\mu_7$}
    \end{subfigure}
    \hspace{0.02\textwidth}
        \begin{subfigure}[b]{0.31\textwidth}
            \setlength{\fboxrule}{0.5pt}
\setlength{\fboxsep}{0pt}
\fbox{\includegraphics[trim = {0.1cm 0.1cm 0.1cm 0.1cm}, clip, width=\textwidth]{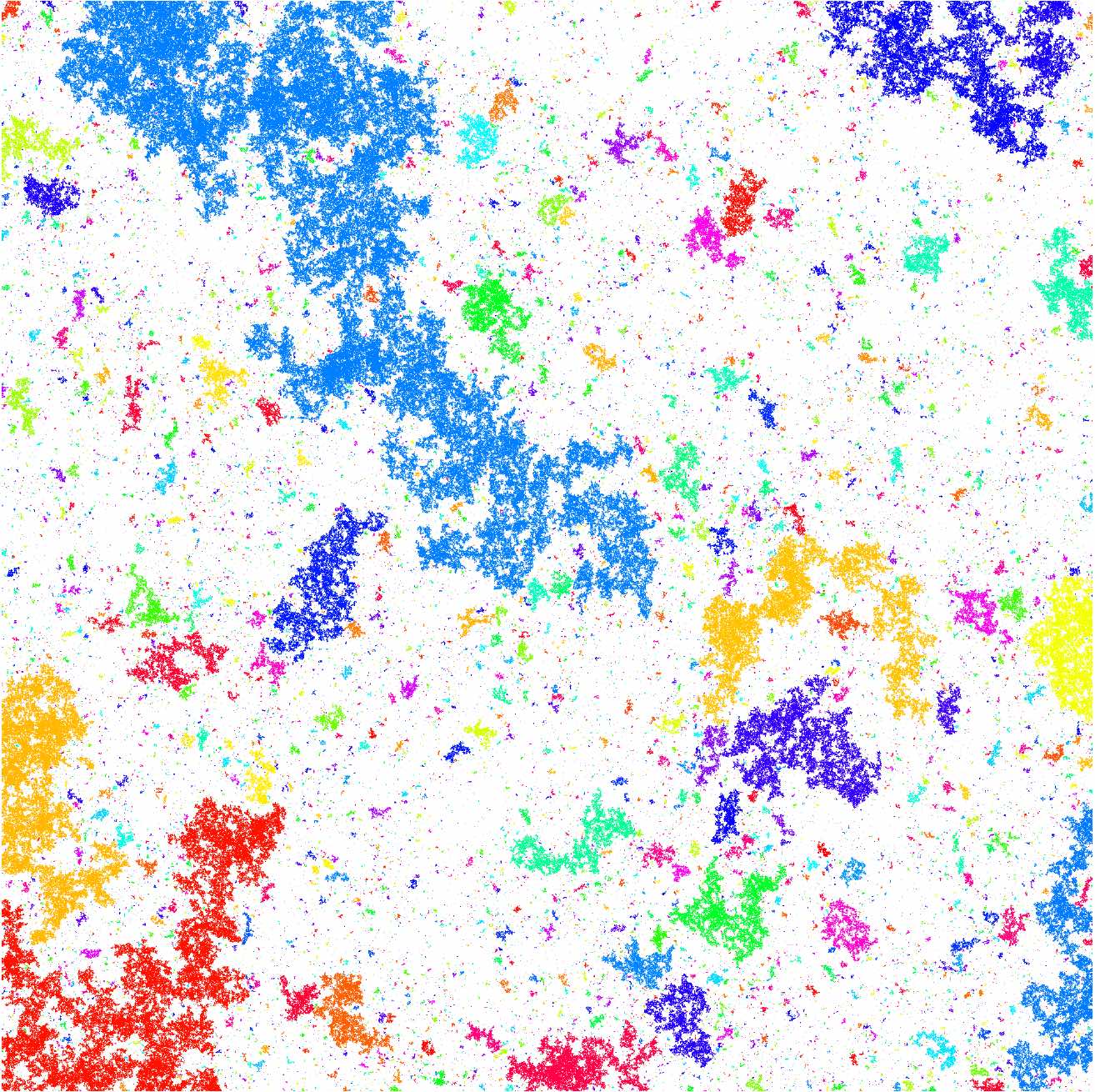}}
        \caption{Seven iterations: $\mu_8$}
    \end{subfigure}
    \hspace{0.02\textwidth}
        \begin{subfigure}[b]{0.31\textwidth}
            \setlength{\fboxrule}{0.5pt}
\setlength{\fboxsep}{0pt}
\fbox{\includegraphics[trim = {0.1cm 0.1cm 0.1cm 0.1cm}, clip, width=\textwidth]{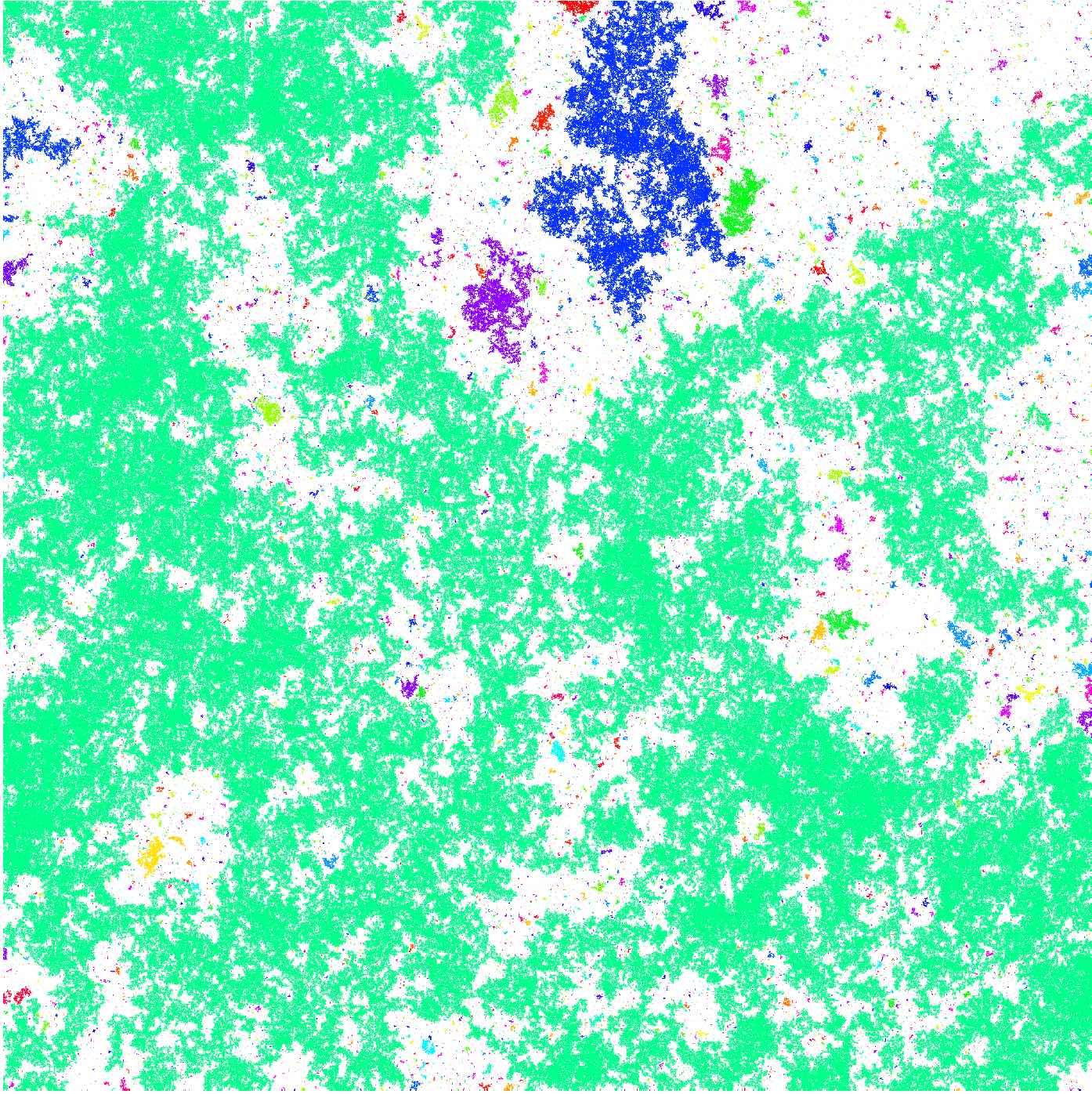}}
        \caption{Eight iterations: $\mu_9$}
    \end{subfigure}
    \caption{Simulations of the percolation processes constructed by iterative `thininning and independent union' as in \cref{ss.constru} with $p=0.35$ on a $2000$ by $2000$ box in the square lattice $\Z^2$. Unoccupied squares are white, while each cluster of occupied squares has been given a random colour for visualization purposes. In each case, the displayed configuration sampled from $\mu_{i+1}$ was obtained by taking the displayed configuration sampled from $\mu_{i}$ together with another independent configuration sampled from $\mu_i$, and then performing the procedure described in \cref{ss.constru}. These simulations strongly suggest that, for small densities on two-dimensional lattices, $\mu_i$ is supported on configurations with no infinite clusters for every $i\geq 1$. Note that the large clusters appear to have an interesting fractal-like structure similar to that which appears in critical percolation models. 
    }
    \label{fig:Z2}
\end{figure}

\begin{figure}
\centering
    \begin{subfigure}[b]{0.31\textwidth}
        \setlength{\fboxrule}{0.5pt}
\setlength{\fboxsep}{0pt}
\fbox{\includegraphics[trim = {18.1cm 3.3cm 16.5cm 3.36cm}, clip, width=\textwidth]{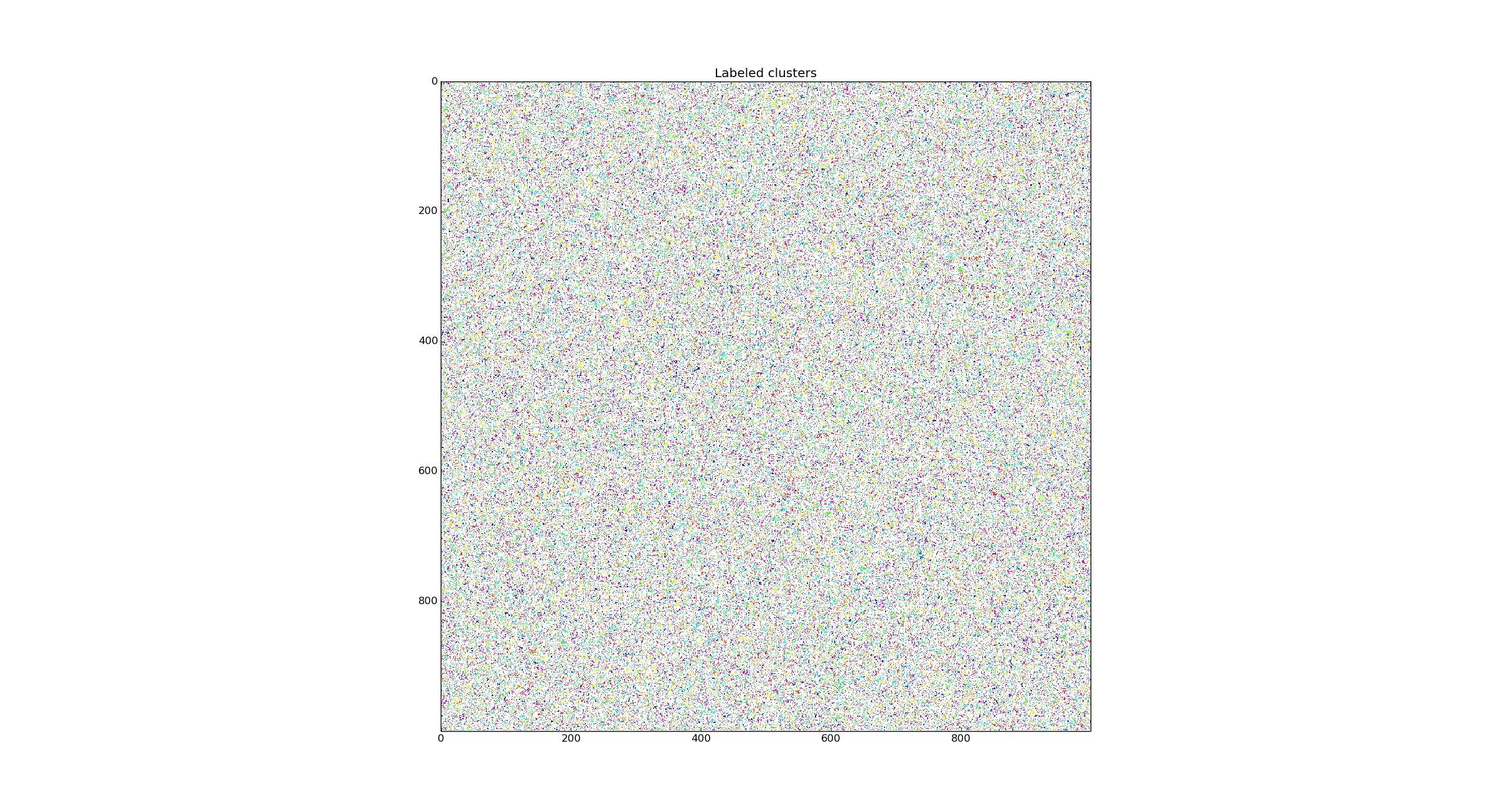}}
        \caption{Bernoulli percolation: $\mu_1$}
    \end{subfigure}
    \hspace{0.02\textwidth}
    \begin{subfigure}[b]{0.31\textwidth}
        \setlength{\fboxrule}{0.5pt}
\setlength{\fboxsep}{0pt}
\fbox{\includegraphics[trim = {18cm 3.3cm 16.5cm 3.36cm}, clip, width=\textwidth]{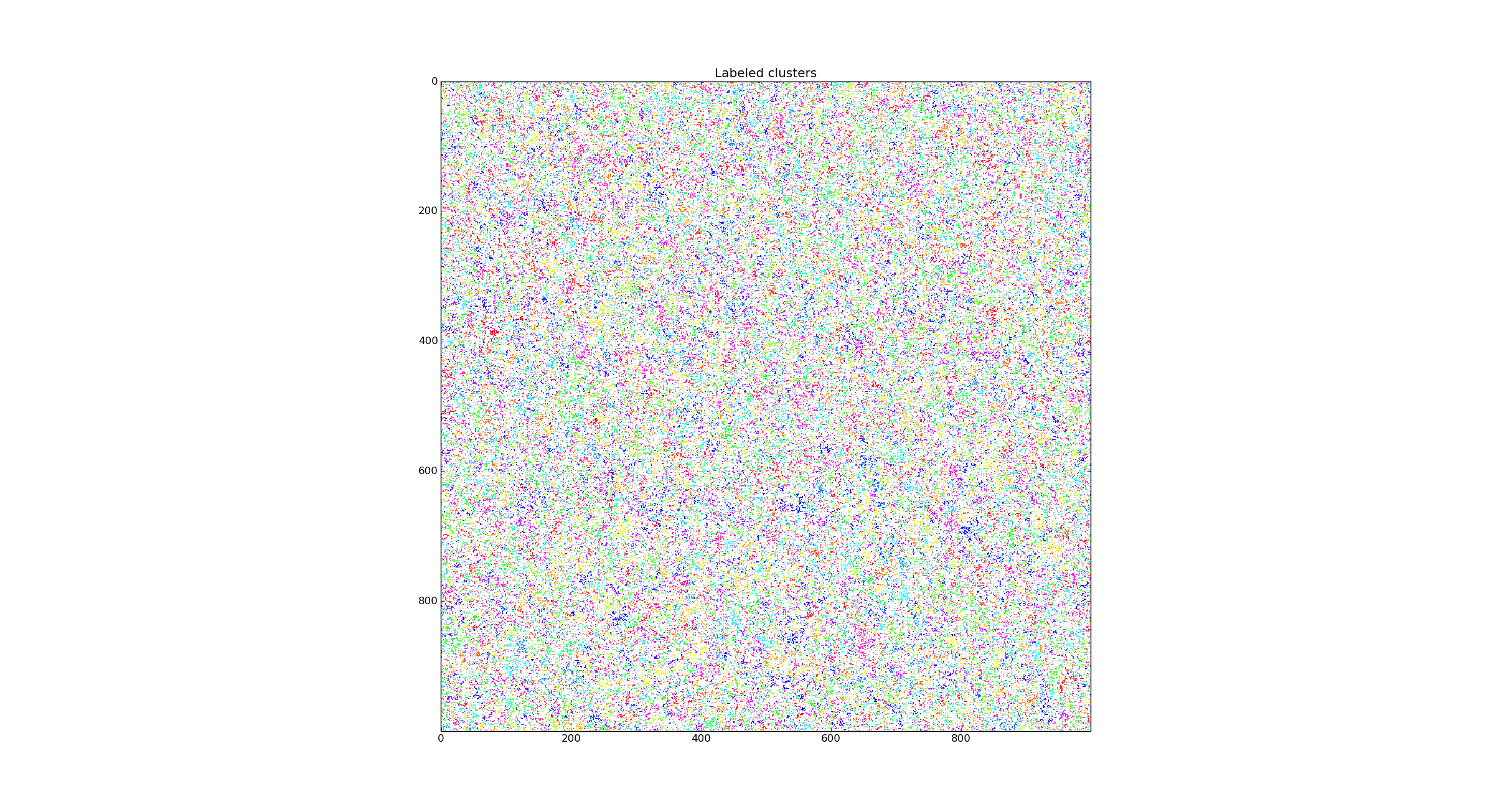}}
        \caption{One iteration: $\mu_2$}
    \end{subfigure}
    \hspace{0.02\textwidth}
        \begin{subfigure}[b]{0.31\textwidth}
        \setlength{\fboxrule}{0.5pt}
\setlength{\fboxsep}{0pt}
\fbox{\includegraphics[trim = {0.04cm 0.04cm 0.04cm 0.04cm}, clip, width=\textwidth]{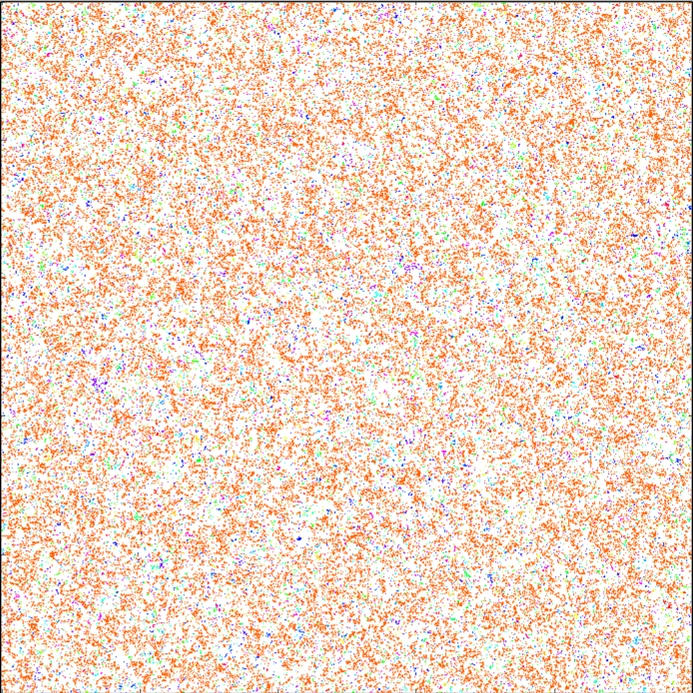}}
        \caption{Two iterations: $\mu_3$}
    \end{subfigure}
    \caption{Simulations of the percolation processes constructed by iterative `thininning and independent union' as in \cref{ss.constru} with $p=0.23$ on a $1000$ by $1000$ by $1000$ box in the cubic lattice $\Z^3$. The simulation sampled in each figure is independent of those in the other figures. Here we have sampled the process on the whole box, computed the clusters, and have presented the random equivalence relation that these clusters induce on the two-dimensional slice $[1,1000]^2 \times \{500\}$. Unoccupied cubes are white, while each slice of a 3d cluster of occupied cubes has been given a random colour for visualization purposes. In contrast to the two-dimensional case, but similarly to our primary setting of Kazhdan groups, it appears that a unique infinite (green) cluster emerges after two iterations. 
    }
    \label{fig:Z3}
\end{figure}

\begin{remark}\label{r.insertion}
Instead of relying on \cref{p.qthin} and working with cluster frequencies directly, one could instead write down a proof of the insertion tolerance of our measures $\mu_i$ (which is true though not completely immediate), then use \cite[Theorem 4.1 and Lemma 6.4]{LS99} of Lyons and Schramm almost as a black box. See also \cref{r.insuni}.
\end{remark}

\begin{remark}
Reflecting on the proof of \cref{thm:main} may suggest that we do not use the full power of property (T), but rather the apparently weaker property that any weak$^*$ limit of \emph{weakly-mixing} measures in $M(\Gamma,\Omega)$ is ergodic. However, it is a result of Kechris \cite[Theorem 12.8]{MR2583950} that this property is equivalent to property (T), see also \cite{MR2458045}. 
\end{remark}


\begin{remark}\label{r.fiid}
Our proof strategy seems to break down if one wanted to prove that every infinite Kazhdan group has fixed price 1, or equivalently that $\cost^*(\Gamma)=1$ as defined in~(\ref{eq:upper_cost_def}). 

Section~\ref{ss.redu}, the reduction part, continues to work in the FIID setting: Indeed, if one can construct a FIID process in $M(\Gamma,\conn(\Gamma))$ with expected degree at most $\eps$, then either proof of \cref{prop:sparse_cost} will yield a process in $F_{\mathrm{IID}}(\Gamma,\spann(\Gamma))$ with expected degree at most $2+\eps$. (The fact that the WUSF is a FIID can be deduced from the `stack of arrows' implementation of Wilson's algorithm and its interpretation in terms of  \emph{cycle-popping} \cite{Wilson96,BLPS}.)

On the other hand, it seems unlikely that the thinning procedure in the construction of \cref{ss.constru} can be carried out using FIID processes. Indeed, as explained by Klaus Schmidt in the proof of Theorem 2.4 of \cite{Schmidt}, it was implicitly proved by Losert and Rindler \cite{LosertRindler} that the Markov operator for any generating set of a nonamenable group $\Gamma$ acting on $L^2([0,1]^\Gamma,\mathrm{Leb}^{\otimes\Gamma})$ has a spectral gap, and hence that the Bernoulli shift is strongly ergodic. See \cite[Section 3]{KechrisTsankov} and \cite[Theorem 3.1]{BSzV15} for related results.
 This spectral gap implies that the agreement probability for some pair of neighbours is separated away from 1 in any FIID site percolation of fixed density $p \in (0,1)$, and this bound is clearly inherited by weak$^*$ limits.  (More generally, it is a theorem of Ab\'ert and Weiss \cite[Theorem 4]{AbW} that any weak$^*$ limit of factors of a strongly ergodic process is ergodic.) 
Thus, by \cref{cor:degeneration}, on \emph{any} nonamenable Cayley graph there exists $i_{\mathrm{fiid}}<\infty$ such that $\mu_i$ is not FIID for $i > i_{\mathrm{fiid}}$. There seems to be no reason to expect that $i_{\mathrm{fiid}} =i_{\mathrm{freq}}$ in the Kazhdan case, which would be needed to prove $\cost^*(\Gamma)=1$ via this strategy.
\end{remark}

\begin{remark}
It is perhaps better to think of the proof of \cref{thm:main} as a proof of the \emph{contrapositive} of that theorem, i.e., as a proof that every countable group with cost $>1$ does not have property (T). Indeed, if $\Gamma$ is finitely generated with $\cost(\Gamma)>1$, then running our iterations with $p>0$ small enough we can never arrive at a unique infinite cluster, and hence we obtain an explicit sequence $\mu_i \in E(\Gamma,\Omega)$ converging to the non-ergodic measure $p \delta_\Gamma + (1-p)\delta_\emptyset$.
\end{remark}

\section*{Acknowledgments}

We are grateful to Mikl\'os Ab\'ert for many helpful discussions, to MFO, Oberwolfach, where this work was conceived, and to Damien Gaboriau and Russ Lyons for comments on the manuscript. We also thank the anonymous referees for their thorough reading and helpful comments. The work of GP was supported by the ERC Consolidator Grant 772466 ``NOISE'', and by the Hungarian National Research, Development and Innovation Office, NKFIH grant K109684.

  \setstretch{1}
  \bibliographystyle{abbrv}
  \bibliography{unimodularthesis}

\begin{thebibliography}{10}

\bibitem{MR2966663}
M.~Ab\'{e}rt and N.~Nikolov.
\newblock Rank gradient, cost of groups and the rank versus {H}eegaard genus
  problem.
\newblock {\em J. Eur. Math. Soc. (JEMS)}, 14(5):1657--1677, 2012.

\bibitem{AbW}
M.~Ab{\'e}rt and B.~Weiss.
\newblock Bernoulli actions are weakly contained in any free action.
\newblock {\em Ergodic Theory and Dynamical Systems}, 33(2):323--333, 2013.

\bibitem{BSzV15}
{\'A}.~Backhausz, B.~Szegedy, and B.~Vir{\'a}g.
\newblock Ramanujan graphings and correlation decay in local algorithms.
\newblock {\em Random Structures \& Algorithms}, 47(3):424--435, 2015.

\bibitem{MR2415834}
B.~Bekka, P.~de~la Harpe, and A.~Valette.
\newblock {\em Kazhdan's property ({T})}, volume~11 of {\em New Mathematical
  Monographs}.
\newblock Cambridge University Press, Cambridge, 2008.

\bibitem{BV97}
B.~Bekka and A.~Valette.
\newblock Group cohomology, harmonic functions and the first ${L}^2$-{B}etti
  number.
\newblock {\em Potential Analysis}, 6(4):313--326, 1997.

\bibitem{BLPS99}
I.~Benjamini, R.~Lyons, Y.~Peres, and O.~Schramm.
\newblock Group-invariant percolation on graphs.
\newblock {\em Geom. Funct. Anal.}, 9(1):29--66, 1999.

\bibitem{BLPS}
I.~Benjamini, R.~Lyons, Y.~Peres, and O.~Schramm.
\newblock Uniform spanning forests.
\newblock {\em Ann. Probab.}, 29(1):1--65, 2001.

\bibitem{burton1989density}
R.~M. Burton and M.~Keane.
\newblock Density and uniqueness in percolation.
\newblock {\em Communications in mathematical physics}, 121(3):501--505, 1989.

\bibitem{de2006relative}
Y.~De~Cornulier.
\newblock Relative {K}azhdan property.
\newblock In {\em Annales Scientifiques de l'{\'E}cole Normale Sup{\'e}rieure},
  volume~39, pages 301--333. Elsevier, 2006.

\bibitem{MR2777775}
T.~Fern\'{o}s.
\newblock Relative property ({T}) and the vanishing of the first
  {$\ell^2$}-{B}etti number.
\newblock {\em Bull. Belg. Math. Soc. Simon Stevin}, 17(5):851--857, 2010.

\bibitem{MR2807836}
A.~Furman.
\newblock A survey of measured group theory.
\newblock In {\em Geometry, rigidity, and group actions}, Chicago Lectures in
  Math., pages 296--374. Univ. Chicago Press, Chicago, IL, 2011.

\bibitem{MR1646912}
D.~Gaboriau.
\newblock Mercuriale de groupes et de relations.
\newblock {\em C. R. Acad. Sci. Paris S\'{e}r. I Math.}, 326(2):219--222, 1998.

\bibitem{Gabcout}
D.~Gaboriau.
\newblock Co{\^u}t des relations d'{\'e}quivalence et des groupes.
\newblock {\em Inventiones Mathematicae}, 139(1):41--98, 2000.

\bibitem{MR1953191}
D.~Gaboriau.
\newblock Invariants {$\ell^2$} de relations d'\'{e}quivalence et de groupes.
\newblock {\em Publ. Math. Inst. Hautes \'{E}tudes Sci.}, (95):93--150, 2002.

\bibitem{MR1919400}
D.~Gaboriau.
\newblock On orbit equivalence of measure preserving actions.
\newblock In {\em Rigidity in dynamics and geometry ({C}ambridge, 2000)}, pages
  167--186. Springer, Berlin, 2002.

\bibitem{GabExamples}
D.~Gaboriau.
\newblock Examples of groups that are measure equivalent to the free group.
\newblock {\em Ergodic Theory and Dynamical Systems}, 25(6):1809--1827, 2005.

\bibitem{GabOHD}
D.~Gaboriau.
\newblock Invariant percolation and harmonic {D}irichlet functions.
\newblock {\em Geometric \& Functional Analysis GAFA}, 15(5):1004--1051, 2005.

\bibitem{GabICM}
D.~Gaboriau.
\newblock Orbit equivalence and measured group theory.
\newblock In {\em Proceedings of the International Congress of Mathematicians
  2010}, pages 1501--1527. World Scientific, 2010.

\bibitem{MR1475550}
E.~Glasner and B.~Weiss.
\newblock Kazhdan's property {T} and the geometry of the collection of
  invariant measures.
\newblock {\em Geom. Funct. Anal.}, 7(5):917--935, 1997.

\bibitem{HN15a}
T.~Hutchcroft and A.~Nachmias.
\newblock Indistinguishability of trees in uniform spanning forests.
\newblock {\em Probability Theory and Related Fields}, pages 1--40, 2016.

\bibitem{IKT09}
A.~Ioana, A.~S. Kechris, and T.~Tsankov.
\newblock Subequivalence relations and positive-definite functions.
\newblock {\em Groups, Geometry, and Dynamics}, 3(4):579--625, 2009.

\bibitem{jaudon2007notes}
G.~Jaudon.
\newblock Notes on relative {K}azhdan's property {(T)}.
\newblock Unpublished lecture notes. Available at
  \url{http://www.unige.ch/math/folks/jaudon/notes.pdf}, 2007.

\bibitem{MR2342638}
M.~Kassabov.
\newblock Universal lattices and unbounded rank expanders.
\newblock {\em Invent. Math.}, 170(2):297--326, 2007.

\bibitem{Kazhdan}
D.~A. Kazhdan.
\newblock Connection of the dual space of a group with the structure of its
  closed subgroups.
\newblock {\em Functional Analysis and its Applications}, 1(1):63--65, 1967.

\bibitem{KechrisTsankov}
A.~Kechris and T.~Tsankov.
\newblock Amenable actions and almost invariant sets.
\newblock {\em Proceedings of the American Mathematical Society},
  136(2):687--697, 2008.

\bibitem{MR2583950}
A.~S. Kechris.
\newblock {\em Global aspects of ergodic group actions}, volume 160 of {\em
  Mathematical Surveys and Monographs}.
\newblock American Mathematical Society, Providence, RI, 2010.

\bibitem{MR2095154}
A.~S. Kechris and B.~D. Miller.
\newblock {\em Topics in orbit equivalence}, volume 1852 of {\em Lecture Notes
  in Mathematics}.
\newblock Springer-Verlag, Berlin, 2004.

\bibitem{MR2458045}
D.~Kerr and M.~Pichot.
\newblock Asymptotic abelianness, weak mixing, and property {T}.
\newblock {\em J. Reine Angew. Math.}, 623:213--235, 2008.

\bibitem{Levitt}
G.~Levitt.
\newblock On the cost of generating an equivalence relation.
\newblock {\em Ergodic Theory and Dynamical Systems}, 15(6):1173--1181, 1995.

\bibitem{LosertRindler}
V.~Losert and H.~Rindler.
\newblock Almost invariant sets.
\newblock {\em Bulletin of the London Mathematical Society}, 13(2):145--148,
  1981.

\bibitem{LyonsFixed}
R.~Lyons.
\newblock Fixed price of groups and percolation.
\newblock {\em Ergodic Theory and Dynamical Systems}, 33(1):183--185, 2013.

\bibitem{LP:book}
R.~Lyons and Y.~Peres.
\newblock {\em Probability on Trees and Networks}.
\newblock Cambridge University Press, New York, 2016.
\newblock Available at \url{http://pages.iu.edu/~rdlyons/}.

\bibitem{LS99}
R.~Lyons and O.~Schramm.
\newblock Indistinguishability of percolation clusters.
\newblock {\em Ann. Probab.}, 27(4):1809--1836, 1999.

\bibitem{MR939574}
G.~A. Margulis.
\newblock Explicit group-theoretic constructions of combinatorial schemes and
  their applications in the construction of expanders and concentrators.
\newblock {\em Problemy Peredachi Informatsii}, 24(1):51--60, 1988.

\bibitem{martin2003analyse}
F.~Martin.
\newblock {\em Analyse harmonique et 1-cohomologie r{\'e}duite des groupes
  localement compacts}.
\newblock PhD thesis, Univ. Neuchatel, 2003.

\bibitem{OW80}
D.~S. Ornstein and B.~Weiss.
\newblock Ergodic theory of amenable group actions. {I}: {T}he {R}ohlin lemma.
\newblock {\em Bulletin of the American Mathematical Society}, 2(1):161--164,
  1980.

\bibitem{MR2827095}
J.~Peterson and A.~Thom.
\newblock Group cocycles and the ring of affiliated operators.
\newblock {\em Invent. Math.}, 185(3):561--592, 2011.

\bibitem{Schmidt}
K.~Schmidt.
\newblock Amenability, {K}azhdan's property {T}, strong ergodicity and
  invariant means for ergodic group-actions.
\newblock {\em Ergodic Theory and Dynamical Systems}, 1(2):223--236, 1981.

\bibitem{MR1813225}
Y.~Shalom.
\newblock Bounded generation and {K}azhdan's property ({T}).
\newblock {\em Inst. Hautes \'{E}tudes Sci. Publ. Math.}, (90):145--168 (2001),
  1999.

\bibitem{Walters}
P.~Walters.
\newblock {\em An introduction to ergodic theory}, volume~79 of {\em Graduate
  Texts in Mathematics}.
\newblock Springer Science \& Business Media, 2000.

\bibitem{Wilson96}
D.~B. Wilson.
\newblock Generating random spanning trees more quickly than the cover time.
\newblock In {\em Proceedings of the {T}wenty-eighth {A}nnual {ACM} {S}ymposium
  on the {T}heory of {C}omputing ({P}hiladelphia, {PA}, 1996)}, pages 296--303.
  ACM, New York, 1996.

\end{thebibliography}
\end{document}